\numberwithin{equation}{section}
\theoremstyle{plain}
\newtheorem{theorem}[subsubsection]{Theorem}
 \newtheorem{lemma}[subsubsection]{Lemma}
 \newtheorem{prop}[subsubsection]{Proposition}
 \newtheorem{cor}[subsubsection]{Corollary}
 \newtheorem{conj}[subsubsection]{Conjecture}
 \theoremstyle{definition}
\newtheorem{remark}[subsubsection]{Remark}
\newcommand{\bss}{\mathbb{S}}
\newcommand{\bii}{\mathbb{I}}
\newcommand{\CC}{\mathbb{C}}
\newcommand{\PP}{\mathbb{P}}
\newcommand{\ZZ}{\mathbb{Z}}
\newcommand{\bqq}{\mathbb{Q}}
\newcommand{\calF}{\mathcal{F}}
\newcommand{\calO}{\mathcal{O}}
\newcommand{\rzz}{\mathrm{Z}}
\newcommand{\rqq}{\mathrm{Q}}
\newcommand{\rtt}{\mathrm{T}}
\newcommand{\Res}{\textup{Res}}
\newcommand{\supp}{\mathrm{supp}}
\newcommand\rmT{\mathrm{T}}
\newcommand{\quash}[1]{}
\newcommand{\pt}{\textup{pt}}
\newcommand{\bare}{\mathrm{bare}}
\newcommand{\Hilb}{\textup{Hilb}}
\newcommand{\calL}{\mathcal{L}}
\newcommand{\calI}{\mathcal{I}}
\newcommand{\calE}{\mathcal{E}}
\newcommand{\ti}{\times}
\newcommand{\ot}{\otimes}
\newcommand{\cnt}{\mathbf{c}}
\newcommand{\ch}{\mathrm{ch}}
\newcommand{\cc}{\mathrm{c}}
\newcommand{\ee}{\mathrm{e}}
\newcommand{\PPP}{\mathrm{P}}
\newcommand{\PT}{\mathrm{PT}}
\newcommand{\DT}{\mathrm{DT}}
\title{EGL formula for DT/PT theory of local curves}
\author{A. Oblomkov}
\address{
A.~Oblomkov\\
Department of Mathematics and Statistics\\
University of Massachusetts at Amherst\\
Lederle Graduate Research Tower\\
710 N. Pleasant Street\\
Amherst, MA 01003 USA
}
\email{oblomkov@math.umass.edu}
\begin{document}
\begin{abstract}
  In this note we prove an integral formula for the bare one-leg PT vertex
  with descendents.
  The formula follows from the PT version of Ellingsrud-G\"ottsche-Lehn formula that
  is explained here. We apply the integral formula to obtain an elementary
  proof of rationality of one-leg capped PT vertex with descendents.
  We also obtain an
  integral formula for degree zero DT invariants with descendents. Finally
  we propose an explicit non-equivariant DT/PT correspondence as well as
  one descendent insertion fully-equivariant DT/PT formula.
\end{abstract}

\maketitle

\section{Introduction}
\label{sec:introduction}

Currently, there are many methods for enumerating sheaves on a three-fold \(X\) \cite{PandharipandeThomas15}, probably the most popular method is
known under the name PT theory  \cite{PandharipandeThomas15}. In this paper we explore a version of these invariants with the descendent insertions:
\[\langle\prod_i\ch_{k_i}(\gamma_i)\rangle_{X,\beta,\chi}^{\PT}=
  \int_{[\PPP(X,\beta)_\chi]^{vir}}\prod_i \ch_{k_i}(\gamma_i),\quad \gamma_i\in H^*(X),\]
the precise definition of \(\ch_k(\gamma)\) could be found in \cite{PandharipandeThomas09} or later in the current paper. The generating function of these invariants
\[\rzz^X_{\PT,\beta}(\prod_i \ch_{k_i}(\gamma_i)):=\sum_\chi \langle\prod_i \ch_{k_i}(\gamma_i)\rangle_{X,\beta,\chi} q^\chi,\]
is related to the analogous invariant in GW theory \cite{PandharipandePixton14}, \cite{OblomkovOkounkovPandharipande18}.

If \(\rtt=\CC^*\times\CC^*\times\CC^*\) acts on \(X\) and \(D\subset X\) is an equivariant divisor
then the definition extends to the equivariant relative, we denote the generating
function of corresponding invariant by: 
\[\rzz_{\PT,\beta}^{X/D}(\prod_i \ch_{k_i}(\gamma_i)|\omega)^{\rtt}\in \CC[q^{-1}][[q]]\ot\CC(t),\quad
  \CC[t_1,t_2,t_3]=H^*_{\mathrm{T}}(\pt),\]
for \(\omega\in H^*_\rtt(\Hilb_n(D))\).


The first goal of this paper is to write an analog of the celebrated EGL formula \cite{EllingsrudGoettscheLehn99} for the integral of the
tautological classes over the Hilbert scheme of points on the surface

\begin{theorem}\label{thm:PT-int}
  Let \(X=\calO(d_1)\oplus\calO(d_2)\)
  the generating function of the PT invariants 
  is given by
  \begin{equation}\label{eq:PT-int}
    \rzz^{X}_{\PT,n[\mathbb{P}^1]}(\prod_i \ch_{m_i}([0])\prod_j\ch_{l_j}([\infty]))^{\rmT}=\int_{|z_i|=R_i}\mathrm{F}^{d}_{\vec{m},\vec{l}}(q,t,z_1,\dots,z_{2n}).
  \end{equation}
  where 
  \(\mathrm{F}^d_{\vec{m},\vec{l}}\) is an explicit universal differential \(2n\)-form  and \(0\ll R_i\ll R_{i+1}\).
\end{theorem}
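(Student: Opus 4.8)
The plan is to combine virtual $\rmT$-equivariant localization on the stable pairs moduli space with a reduction to tautological integrals on the Hilbert scheme of points of the fiber, which is the \emph{PT analogue of the Ellingsrud--G\"ottsche--Lehn formula} alluded to in the introduction. Write $\rmT=\CC^*\times\CC^*\times\CC^*$ with the first two factors scaling the fibers $\calO(d_1),\calO(d_2)$ (weights $t_1,t_2$) and the last scaling the base $\mathbb{P}^1$ (weight $t_3$, fixing $0,\infty$). First I would apply the Graber--Pandharipande virtual localization formula to $[\PPP(X,n[\mathbb{P}^1])_\chi]^{\mathrm{vir}}$, so that each invariant $\langle\prod_i\ch_{m_i}([0])\prod_j\ch_{l_j}([\infty])\rangle$ becomes a sum over the $\rmT$-fixed locus of the descendent classes restricted to the fixed points, divided by the equivariant Euler class $e^{\rmT}(N^{\mathrm{vir}})$ of the virtual normal bundle.

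Next I would analyze the $\rmT$-fixed stable pairs. The base $\CC^*$ forces the embedded/quotient structure of the support to concentrate over $0,\infty\in\mathbb{P}^1$, while the generic behaviour along the zero section is governed by a fixed Cohen--Macaulay profile; assembling the resulting contributions into the generating series in $q$ repackages the computation as an equivariant tautological integral over the Hilbert scheme $\Hilb_n(\CC^2)$ of $n$ points on the fiber, a space of complex dimension $2n$. Under this identification the descendents $\ch_{m}([0])$ and $\ch_{l}([\infty])$ are translated into explicit tautological classes built from the Chern character of the tautological sheaf twisted by the line weights, and the dependence on $d=(d_1,d_2)$ enters only through these twists. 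This is where the EGL-type \emph{universality} is used: the integral depends on the geometry only through the combinatorial data $(d,\vec m,\vec l)$ and the universal structure of $\Hilb_n(\CC^2)$.

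I would then represent this tautological integral as an iterated residue in $2n$ variables. The $\rmT$-fixed points of $\Hilb_n(\CC^2)$ are the monomial ideals, indexed by partitions $\lambda\vdash n$, and equivariant localization there writes the integral as a sum over $\lambda$ of a ratio of products in the box contents. The nested contours $0\ll R_i\ll R_{i+1}$ are precisely the device that reassembles this sum: integrating the explicit form $\mathrm{F}^d_{\vec m,\vec l}(q,t,z_1,\dots,z_{2n})$ successively over $|z_i|=R_i$ picks up, by the ordering of the radii, exactly the poles corresponding to admissible box configurations, each residue reproducing one fixed-point contribution. Collecting the weights, the virtual normal bundle, and the descendent evaluations into a single rational form yields the universal $2n$-form asserted in the theorem.

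The main obstacle is the precise bookkeeping in the two middle steps. First, the explicit equivariant computation of the virtual tangent--obstruction complex at the fixed pairs, hence of $e^{\rmT}(N^{\mathrm{vir}})$, together with the equivariant restriction of the descendents, must be carried out carefully enough to produce a \emph{closed} rational expression. Second, one must verify that the pole structure of $\mathrm{F}^d_{\vec m,\vec l}$, read off in the prescribed contour order, is in exact bijection with the partition sum, with matching residues. Controlling the $q$-dependence so that the contour integral lands in $\CC[q^{-1}][[q]]\otimes\CC(t)$, and checking convergence for $0\ll R_i\ll R_{i+1}$, is a further point that the explicit form of $\mathrm{F}$ must be designed to guarantee.
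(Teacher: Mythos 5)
Your high-level skeleton (virtual localization, reduction to $\Hilb_n(\CC^2)$, iterated residues over nested contours) does match the paper's strategy, but the proposal has a genuine gap at its core: the theorem's entire content is the \emph{explicit} universal form $\mathrm{F}^d_{\vec m,\vec l}$, and you never construct it --- you defer exactly this to your list of ``obstacles.'' In the paper the closed form comes from Theorem~\ref{thm:mainPT}, whose proof rests on a concrete computation you do not anticipate: removing one column of height $k$ from a $\rmT$-fixed restricted partition changes the vertex character $\mathrm{V}_v$ by an explicitly computable amount, and applying the plethystic exponential turns this difference into ratios of Pochhammer symbols, i.e.\ the hypergeometric factors $\Pi(\vec k,z)$ and $F_k(z)$. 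One also needs the vanishing analysis showing that inadmissible height vectors $\vec k$ (those violating the nesting inequalities) contribute zero, so the sum over $\vec k\ge 0$ may be taken unconstrained. Without this induction there is no candidate integrand, hence no proof.

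Moreover, your proposed residue mechanism is not the one that works in the paper, and as stated it would be hard to substantiate. You assert that the nested contours pick up poles ``in exact bijection with admissible box configurations, each residue reproducing one fixed-point contribution.'' In the paper the contour variables do \emph{not} enumerate 3D box configurations: they are the Chern roots of $\calO^{[n]}$, generated one at a time by the EGL induction on the nested Hilbert scheme $\Hilb_{n,n-1}(S)$ via the line bundle $\calL$ (Proposition~\ref{prop:Hilb-int}, with residues taken at $z_i=\infty$ in a fixed order), while the leg data $\vec k$ remains an explicit $q^{|\vec k|}$-weighted hypergeometric \emph{sum inside the integrand}, never converted to residues. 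You also misattribute the number $2n$ of variables to $\dim\Hilb_n(\CC^2)=2n$; in fact each bare vertex contributes $n$ variables (the rank of $\calO^{[n]}$), and the doubling comes from gluing the two one-leg vertices at $0$ and $\infty$ through the capping formula \eqref{eq:locP1}, with the edge term rewritten in the Chern-root variables via Okounkov's interpolation polynomials $J_\lambda(z')$, $J_\lambda(z'')$ and the equivariant substitution $s_1=t_1+d_1t_3$, $s_2=t_2+d_2t_3$, $s_3=-t_3$ in the second factor. This gluing step, which is where the dependence on $d=(d_1,d_2)$ actually enters, is absent from your outline.
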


The exact formula for \(\mathrm{F}^{d}_{\vec{m},\vec{l}}\) is given in the main body of the note. The function \(\mathrm{F}^{d}_{\vec{m},\vec{l}}\)
is a version of the hypergeometric series and thus
has an integral presentation, itself. Thus we get a presentation of the PT invariant in the
question and contour integral some explicit meromorphic differential on \(\CC^{3n^2}\). 
We  connect this
formula with the Bethe anzatz results for \(J\)-functions of quasi-maps in a forthcoming publication \cite{Oblomkov19}.


Another consequence of the formula is a more elementary proof of the rationality of the one-leg PT vertex with the descendents \cite{PandharipandePixton12}, \cite{Smirnov16a}:

\begin{cor} \cite{PandharipandePixton12}, \cite{Smirnov16a}
  Let \(X=\CC^2\ti \PP^1\), \(\omega\in H^*(\Hilb_n(\CC^2))\) then
  \[\rzz^{X/\CC^2\ti \infty}_{\PT,n[\PP^1]}(\prod_i\ch_{k_i}([0])|\omega)^{\rtt}\in \CC(q)\otimes \CC(t).\]
\end{cor}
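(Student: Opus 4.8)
The plan is to deduce rationality in $q$ directly from the integral formula of Theorem~\ref{thm:PT-int}, specializing the geometry to $X = \CC^2 \times \PP^1 = \calO(0) \oplus \calO(0)$, i.e.\ the case $d = (d_1,d_2) = (0,0)$. The key structural input is that the integrand $\mathrm{F}^{d}_{\vec m,\vec l}(q,t,z_1,\dots,z_{2n})$ is, as asserted in the theorem, a \emph{universal} differential $2n$-form built from hypergeometric-type data, and in particular its dependence on $q$ enters through rational expressions in $q$ and the contour variables $z_i$. First I would isolate precisely the $q$-dependence of $\mathrm{F}^d_{\vec m,\vec l}$: the generating parameter $q$ tracks the Euler characteristic $\chi$, and in the PT vertex setup each $z_i$-variable contributes a factor whose denominator is a product of terms of the form $(1 - q\, z_i^{a} t^{b})$ coming from the fixed-point/tangent-space combinatorics. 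The claim to establish is that after performing the iterated contour integration $\int_{|z_i| = R_i}$, the residues collected are rational in $q$.

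\smallskip

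Second, the integration is carried out iteratively in the ordering $0 \ll R_1 \ll R_2 \ll \cdots \ll R_{2n}$, so that at each stage only a controlled subset of poles lies inside the contour. I would argue that for each fixed $i$, the integrand is a rational function of $z_i$ (for the $d=(0,0)$ case the integrand has no logarithms or genuinely transcendental pieces in the $z_i$, only rational factors together with the monomial weights), so that $\oint_{|z_i| = R_i} (\cdots)\, dz_i$ equals a finite sum of residues at the poles enclosed by $|z_i| = R_i$. Each such residue is again a rational function of $q$, $t$ and the remaining variables $z_{i+1},\dots,z_{2n}$, because the poles in $z_i$ occur at loci of the form $z_i = q^{-1} z_j^{c} t^{d}$ or $z_i = $ (monomial in $t$), and evaluating a rational function at such a point preserves rationality in $q$. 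Proceeding inductively through all $2n$ variables, the final output is a rational function of $q$ with coefficients in $\CC(t)$, which is exactly the assertion $\rzz^{X/\CC^2 \times \infty}_{\PT, n[\PP^1]}(\prod_i \ch_{k_i}([0]) \,|\, \omega)^{\rtt} \in \CC(q) \otimes \CC(t)$.

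\smallskip

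Third, I must handle the relative/capped nature of the statement and the general descendent weight $\omega \in H^*(\Hilb_n(\CC^2))$. Theorem~\ref{thm:PT-int} is phrased for the absolute invariant with insertions $\ch_{m_i}([0])$ and $\ch_{l_j}([\infty])$ over $X = \calO(d_1)\oplus\calO(d_2)$; to reach the relative invariant with boundary condition $\omega$ at $\CC^2 \times \infty$, I would invoke the standard rigidification whereby a general relative insertion $\omega$ is expressed, via the basis of $H^*(\Hilb_n(\CC^2))$ dual to the fixed-point/monomial basis, as a $\CC(t)$-linear combination of the descendent insertions $\prod_j \ch_{l_j}([\infty])$ already covered by the integral formula (this is the usual passage between descendents at $\infty$ and relative conditions, cf.\ the cohomological correspondence between $\ch$-insertions and tautological classes on $\Hilb_n$). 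Since $\CC(q)\otimes\CC(t)$ is closed under $\CC(t)$-linear combinations, rationality for the $\ch_{l_j}([\infty])$-form of the invariant propagates to rationality for arbitrary $\omega$.

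\smallskip

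The main obstacle I anticipate is the second step: verifying that the iterated contour integral genuinely produces a \emph{finite} sum of residues that is rational, rather than an infinite series in $q$ that merely happens to be a rational function's Taylor expansion. Concretely, the subtlety is that the PT partition function is naturally an element of $\CC[q^{-1}][[q]]$, an infinite series in $q$, and one must show that the contour prescription resums this series to a genuine rational function. This requires checking that the poles of $\mathrm{F}^d_{\vec m,\vec l}$ in each $z_i$ that are enclosed by $|z_i| = R_i$ are finite in number for each power of $q$ and that the residue sum, regarded as a function of $q$, has only finitely many poles of bounded order — i.e.\ that the $q$-denominators appearing across all residues are drawn from a fixed finite set (cyclotomic-type factors $1 - q^{k}$ times monomials in $t$). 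Establishing this uniform control on the $q$-poles, independent of $\chi$, is the crux; once it is in hand, rationality is immediate. The nested radius ordering $0 \ll R_i \ll R_{i+1}$ is precisely what makes this pole-bookkeeping tractable, since it linearly orders which pole families are captured at each integration.
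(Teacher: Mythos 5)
There is a genuine gap, and it sits exactly at the point you yourself flag as the ``main obstacle.'' Your first two steps mis-describe the integrand of Theorems~\ref{thm:PT-int} and~\ref{thm:mainPT}: the variable $q$ does not enter through rational factors, and the integrand has no $q$-dependent poles in the $z_i$ whatsoever (no loci $z_i=q^{-1}z_j^{c}t^{d}$ --- you may be conflating this with Mellin--Barnes presentations of quasimap vertex functions, where $q$ genuinely shifts the poles; that is not the structure here). In this paper $q$ is pure bookkeeping: $\mathrm{F}^{d}_{\vec m,\vec l}$ is the lattice sum $\sum_{\vec k\ge 0}q^{|\vec k|}\,\Pi(\vec k,z)(\cdots)$ whose coefficients are built from Pochhammer symbols $[x]_k=\Gamma(x+k)/\Gamma(x)$ --- a hypergeometric series in $q$ --- and by Remark~\ref{rem:ord} the contour integral is an iterated residue at $z_i=\infty$, evaluated term by term in the $q$-expansion. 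Hence your inductive residue computation returns exactly the formal series in $\CC[q^{-1}][[q]]$ one started with; no resummation has occurred, and your proposal supplies no mechanism for one. The paper's mechanism is the specialization of equivariant parameters $t_1+t_2=ct_3$, $c\in\ZZ_+$ (equivalently $a_1+a_2=c$), in Section~\ref{sec:spec-equiv-param}: there the Gamma-ratios in $F_k(z)$ telescope to finite products rational in $k$, the Laurent coefficients $[z^{\vec m}]\Omega(z)\Pi(\vec k,z)$ become values of universal polynomials $Q_{\vec m}(\vec k)$, so that $\rzz^{\bare}_{\PT,\vec k}(\prod_k\ch_{i_k}|\omega)=P_{\vec i,\omega}(\vec k)$ for a fixed polynomial $P_{\vec i,\omega}$, and $\sum_{\vec k\ge 0}P_{\vec i,\omega}(\vec k)\,q^{|\vec k|}$ is rational in $q$ because a polynomial summed over the cone $\ZZ_{\ge 0}^{n}$ against $q^{|\vec k|}$ produces a rational function with denominator a power of $(1-q)$. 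This gives rationality only along the infinite family of specializations; the upgrade to full equivariant rationality of the capped vertex is then the argument of \cite{PandharipandePixton12}, which your scheme bypasses entirely. A sanity check that your route cannot work as written: it would prove rationality of the bare/absolute vertex at \emph{generic} equivariant parameters, a statement made nowhere --- rationality is a property of the capped object.

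Your third step has a second, independent gap: the passage between the relative boundary condition $\omega$ at $\CC^2\times\infty$ and descendent insertions $\ch_{l_j}([\infty])$ is not $q$-independent $\CC(t)$-linear algebra. In the paper the capped vertex is expanded as $\sum_{\lambda}\rzz^{\bare}_{\PT}(\prod_k\ch_{i_k}|c_\lambda)\,\mathrm{W}_{\lambda,\mu}\,\mathrm{S}^{\mu}_{\eta}$, where $\mathrm{S}^{\mu}_{\eta}$ is the rubber solution of the quantum differential equation --- a nontrivial $q$-series in its own right. Its rationality under the specialization $t_1+t_2=ct_3$ is a separate and essential input from \cite{PandharipandePixton12}; rationality of the bare vertex alone, even granted, does not yield the corollary. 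So the two missing ideas are precisely the two the paper (following Pandharipande--Pixton) relies on: the integer specialization of the equivariant parameters that collapses the Pochhammer data to polynomials in $\vec k$, and the rubber calculus controlling the relative condition.
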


  To be more precise we provide a simplified version of the key Proposition 4 of \cite{PandharipandePixton12} that states that the
  bare vertex is a rational function of \(q\) under specialization \((t_1+t_2)=ct_3\), \(c\in \ZZ_+\), see section~\ref{sec:spec-equiv-param}
  for more details.



  
The DT theory precedes PT theory by a few years \cite{MaulikNekrasovOkounkovPandharipande06a}, similarly to already discussed PT theory
one defines the generating series of DT invariants with descendent insertions: \[\rzz^X_{\DT,\beta}(\prod_{i=1}^n\ch_{k_i}(\gamma_i))=
  \sum_\chi q^\chi\int_{[\Hilb(X,\beta)_\chi]^{vir}}\prod_i \ch_{k_i}(\gamma_i),\quad \gamma_i\in H^*(X).\]

The rationality statement is more complicated for the DT theory, arguably because the degree zero DT invariants
\(\rzz^X_{\DT,0}(\dots)^{\rmT}\) are non-zero, in contract to PT theory.  In particular, it is conjectured by Okounkov-Pandharipande that the degree zero invariants
\[\rzz^X_{\DT,0}(\prod_{i=1}^n\ch_{k_i}(\gamma_i))^{\rmT}/\rzz^X_{\DT,0}(1)^{\rmT}\] are  \(H^*_{\rmT}(\pt)\)-linear combinations of the monomials of the derivatives of the functions:
\[\mathfrak{F}_{k}:=\sum_{n>0}n^{2k+1}\frac{q^n}{(1-q^n)}.\]

We propose a minor modification of the Okounkov-Pandharipande conjecture, it was probably known to them. We expect the degree zero invariants are the only source of irrationality of DT invariants with descendents. 

\begin{conj}\label{conj:DT0}
  For any \(k_1,\dots,k_n\) and \(\gamma_i\in H^*_{\rmT}(X)\) we have
  \[\rzz_{\DT,\beta}^{X}(\prod_i\ch_{k_i}(\gamma_i))^\rtt=\sum_{\vec{l}\le \vec{k}} \mathrm{C}^{\vec{l}}_{\vec{k},\beta}(\vec{\gamma})\cdot \rzz_{\DT,0}^X(\prod_i\ch_{l_i}(\gamma_i))^\rtt,\]
  where \(mathrm{C}^{\vec{l}}_{\vec{k}}\)  is a rational function of \(q\) with coefficients in \(H^*_\rtt(X)\).
  Moreover there is a universal expression for \(\mathrm{C}^{\vec{l}}_{\vec{k},\beta}(\vec{\gamma})\) in terms of \(c_i(X)\), \(\rzz^X_{\PT,\beta}(\prod_i\ch_{m_i}(\gamma_i))^\rtt\),
  \(\vec{m}\le \vec{l}\).
\end{conj}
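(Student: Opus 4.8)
The plan is to deduce the conjecture from a descendent-enhanced form of the DT/PT wall-crossing, executed $\rtt$-equivariantly and reduced to local curves so that the integral formula of Theorem~\ref{thm:PT-int} can be applied directly. The conceptual picture is classical: for a one-dimensional subscheme $Z\subset X$ one separates $\calO_Z$ into its pure one-dimensional (Cohen--Macaulay) part, recorded by a stable pair, and a zero-dimensional part of embedded and floating points; passing from the ideal sheaf $I_Z$ to the associated stable pair forgets exactly the point data. In the numerical case this yields the Bridgeland--Toda factorization $\rzz^X_{\DT,\beta}=\rzz^X_{\DT,0}\cdot\rzz^X_{\PT,\beta}$, and the content of the conjecture is that with descendents this product is refined into the stated sum, the degree-zero series absorbing all point contributions.

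First I would realize the descendent insertions as operators and track how they decompose across the wall. From the sequence $0\to I_Z\to\calO_X\to\calO_Z\to0$ one has $\ch(I_Z)=\ch(\calO_X)-\ch(\calO_Z)$, and along the locus where $Z$ is a curve $C$ together with points one expands $\ch(\calO_Z)$ via the Leibniz rule into a \emph{curve} contribution and a \emph{point} contribution, the mixing terms being controlled by the normal bundle of $C$ and hence by the Chern classes $c_i(X)$. Applying $\prod_i\ch_{k_i}(\gamma_i)$ therefore produces a sum, indexed by the ways of distributing each insertion onto the curve or onto the points, of products of lower descendents; this is the origin of the index shift $\vec l\le\vec k$ and, on the curve factor, of $\vec m\le\vec l$.

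Next I would integrate over the virtual class and separate the two factors. Using degeneration of $X$ into local pieces and virtual $\rtt$-localization, the $\rtt$-fixed loci of $\Hilb(X,\beta)$ decompose into an edge (curve) part and a point part; the edge integral reassembles, after summing over $\chi$, into the PT descendent series $\rzz^X_{\PT,\beta}(\prod_i\ch_{m_i}(\gamma_i))^\rtt$, while the point integral reassembles into the degree-zero series $\rzz^X_{\DT,0}(\prod_i\ch_{l_i}(\gamma_i))^\rtt$. Reading off the coefficient of each degree-zero term gives $\mathrm{C}^{\vec l}_{\vec k,\beta}(\vec\gamma)$ as a universal expression in $c_i(X)$ and in the PT invariants with $\vec m\le\vec l$, exactly as stated. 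Rationality in $q$ of $\mathrm{C}^{\vec l}_{\vec k,\beta}$ then follows from the rationality of the capped PT vertex established above, together with the fact that the combinatorial gluing factors are rational functions of $q$ produced by the geometric-series sums over the point lengths.

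The main obstacle is making the descendent wall-crossing rigorous: the universal sheaf on $\Hilb(X,\beta)$ and the universal complex on $\PPP(X,\beta)$ are genuinely different objects, so their Chern characters differ by terms supported on the point locus, and one must show that these differences contribute to the virtual integral only through the clean product structure above rather than through uncontrolled mixed terms. The classical Hall-algebra and motivic arguments count objects without tautological weights and give no such statement off the shelf. I expect the cleanest route is to bypass the abstract wall-crossing and argue entirely by localization on the local curve $X=\calO(d_1)\oplus\calO(d_2)$, where the $\rtt$-fixed points are labelled by three-dimensional partitions that factor canonically into a leg (curve) and a finite (point) configuration; matching the localization contributions on the two sides then produces the coefficients and their rationality directly, with all dependence on $c_i(X)$ entering through the equivariant weights of the normal bundle to $C$.
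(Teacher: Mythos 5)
You are attempting to prove a statement that the paper itself leaves open: Conjecture~\ref{conj:DT0} is stated without proof, and the introduction explicitly concedes that the hoped-for EGL-type path to a precise fully equivariant DT/PT formula ``fell short.'' So there is no paper proof to match, and the question is whether your sketch closes the gap on its own. It does not, and the failure point is exactly the step you flag and then assert away: that after $\rtt$-localization the fixed-locus contributions separate into an edge/PT factor and a point/degree-zero factor with controlled mixing. On the local curve the torus-fixed DT configurations are 3D partitions $\pi\in\Pi^{\DT}_\lambda$ with an infinite leg, and the embedded-point part does \emph{not} decouple from the leg: the vertex weight \eqref{eq:V_DT} contains the cross terms $\rqq_v\overline{\rqq}_v\,(1-t_1)(1-t_2)(1-t_3)/(t_1t_2t_3)$ and $\mathrm{F}_e/(1-t_3)$ coupling $\rqq_v$ to $\rqq_e$, and the paper's own measure comparison (the Proposition of Section~\ref{sec:dtpt-formula}) shows that the ratio $\calE^{\DT}_{\pi(\vec{k})}/\calE^{\PT}_{\pi(\vec{k})}$ is a nontrivial product of Pochhammer symbols coupling every box of $\mu$ to every other box --- not a degree-zero measure times a PT measure. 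Correspondingly, the strongest true statement the paper reaches, Theorem~\ref{thm:DTPTdeg0}, matches $\rzz^{\CC^3}_{\DT,0,\mu}$ not with the plain bare PT vertex but with a vertex twisted by a ratio of Euler classes for which the paper admits no geometric interpretation; your proposed ``matching of localization contributions'' would have to undo precisely that twist, and also resum over $\mu$, which is the open problem, not a routine verification.

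There is a second, independent gap at the sheaf level. The splitting of $\ch(\calO_Z)$ into curve and point contributions via the Cohen--Macaulay part is not defined in families: there is no global decomposition of the universal quotient $\bqq$ on $\Hilb(X,\beta)\times X$ into a pure one-dimensional sheaf plus a zero-dimensional one (the CM-ification jumps across the locus of embedded points), so your ``Leibniz'' distribution of the insertions $\ch_{k_i}(\gamma_i)$ is not a legitimate identity of tautological classes on the moduli space --- it can at best be imposed fixed point by fixed point, where it collides with the coupling above. Finally, the inputs you cite do not supply what is needed: the Bridgeland--Toda wall-crossing is numerical and sees no tautological weights, and the known cases $k_i\le 2$ rest on the degenerate identity \eqref{eq:simple} rather than on any universal coefficient structure; neither yields the existence of $\mathrm{C}^{\vec{l}}_{\vec{k},\beta}(\vec\gamma)$ as universal expressions in $c_i(X)$ and PT series, nor their $q$-rationality, since the sum over embedded-point lengths is not a leg-independent geometric series. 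As written, the proposal is a correct diagnosis of the difficulty followed by a restatement of the conjecture, not a proof.
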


The statement is known to be true for toric varieties in the case of toric varieties \cite{MaulikOblomkovOkounkovPandharipande11} under assumptions
\(k_i\le 2\)\footnote{\(\ch_0(\gamma)=\int_X\gamma\) and \(\ch_1(\gamma)=0\).}. In this case PT invariants are essentially equal to the DT invariants:
\begin{equation}\label{eq:simple}
  \rzz_{\DT,\beta}^X(\prod_i\ch_{k_i}(\gamma_i))^{\rmT}=\rzz_{\PT,\beta}^X(\prod_i\ch_{k_i}(\gamma_i))^{\rmT}\rzz_{ \DT,0}^X(1)^{\rmT}.
\end{equation}

The last equation also holds at non-equivariant limit if \(\gamma_i\in H^{\ge 2}(X).\) In the last section of this paper we
present a conjectural formula extending \eqref{eq:simple} to the non-equivariant non-stationary case. 

In general, this paper
was motivated by the hope that EGL-type formalism for DT invariants  would provide a path to precise DT/PT formula
in the fully equivariant case. Unfortunately, we fell short of this goal but we found some interesting relations between
the DT and PT invariants in the case of the theories of local curves.

The  most natural form of the formula is in terms of the modified PT tautological classes \(\ch'_k\). The kernels of
map \([\calO_x\to \calF]\)  in two-step complex representing stable pair glue into a universal sheaf \(\bqq\) over \(P(X,\beta)_\chi\)
and respectively we define
\[\ch'_k=\pi_*^X\ch_k(\bqq).\]

The \(\CC^*\ti\CC^*\ti \CC^*\)- fixed points on \(\Hilb_n(\CC^3)\) are naturally labeled by the plane partitions of size \(n\),
which are also known under the name 3D Young diagrams.
By slicing a 3D Young diagram \(\pi\) the planes parallel to \(x,y\)-plane we represent it as a sequence of nested 2D Young diagrams:
\(\pi^{(1)}\supset \pi^{(2)}\supset\dots\). We denote by
\[\rzz^{\CC^3}_{\DT,0,\mu}(\prod_i \ch_{k_i}(\gamma_i))^T\]
the subsum of the localization formula for \(\rzz^{\CC^3}_{\DT,0}(\prod_i \ch_{k_i}(\gamma_i))^\rtt\) that involves only the torus
fixed point with the labels \(\pi\) such that \(\pi^{(1)}=\mu\).

The PT side of our formula is given by the bare one-leg vertex with descendents:
\[\rzz^{\bare}_{\PT}(\prod_i \ch_{k_i}(\gamma_i)|\mu)^\rtt,\]
which has a localization formula similar to the formula for \(\rzz^{\CC^3}_{\DT,0,\mu}(\prod_i \ch_{k_i}(\gamma_i))^\rtt\), see section \ref{sec:one-leg-invariants} for the precise definition. The main formula in theorem below gives a precise meaning to
the mentioned similarity:

\begin{theorem}\label{thm:DTPTdeg0}
  For any \(m,n\) and \(\mu,|\mu|=n\) there is an explicit meromorphic function \(f(\vec{k},\vec{z},\vec{w})_\mu\), \(\vec{k}\in \CC^n,\vec{w}\in\CC^m\) such that for a generic values of the equivariant parameters we have
  \[\int d\vec{z} \sum_{\vec{k}\ge 0}f(\vec{k},\vec{z},\vec{w})_\mu=(-1)^m\rzz^{\bare}_{\PT}\bigg(\frac{\ee(\bqq)^2
      \ee(\bqq\ot (\calO^{[n]})^\vee\ot \Lambda^\bullet T_S)
     }{
       \ee(\bqq\ot K_X)^2\ee(\bqq\ot (\calO^{[n]})^\vee\ot K_X\ot\Lambda^\bullet T_S)
       } \prod_{i=1}^m\ch'[w_i]\bigg|\mu\bigg)^{\rtt},\]
  \[\int d\vec{z} \sum_{\vec{k}\ge -1}f(\vec{k},\vec{z},\vec{w})_\mu=\rzz^{\CC^3}_{\DT,0,\mu}\bigg(\prod_i \ch[w_i]\bigg|\mu\bigg)^{\rtt}\]
  where \(X=\CC^2\ti \PP^1\), \(S=\CC^2\), \(\ch'[w]\), \(\ch[w]\) are the generating functions for the descendents \(\ch'_k\) and \(\ch_k(1)\) and \(\ee(\bqq\ot \calL)=
\ee(\pi^X_*(\bqq\ot \calL))\) is the Euler class of the corresponding sheaf.
  The integration should be interpreted as an iterated residue at \(z_i=\infty\) evaluation.
\end{theorem}

The first formula in the theorem is essentially an integral presentation of the bare vertex \(\rzz^{\bare}_{\PT}(\dots)\)
from the section \ref{sec:one-leg-invariants}. Unfortunately, we do not have a geometric interpretation
for the ratio of  Euler classes  in the formula.
However, since \(\rzz_{\DT,0}^{\bare}(\dots)\) is the sum of  \(\rzz_{\DT,0,\mu}^{\CC^3}(\dots)\) over all possible \(\mu\), one can
use the previous theorem to write an iterated residue formula for the degree zero \(\DT\) invariants.

The main body of the note is divided on four sections. In the section \ref{sec:egl-form-integr} we rewrite the classic EGL
formula as an iterated residue formula. In the next section \ref{sec:local-as-cont} we recall the localization formalism
for PT invariants with descendents and prove an iterated residue formula for the bare PT vertex with descendents, this
formula implies theorem~\ref{thm:PT-int} almost immediately. Section \ref{sec:local-as-cont} provides an analysis for the
ratio of the PT and DT localization measures and a derivation of the residue formula for degree \(0\) DT invariants from
theorem \ref{thm:DTPTdeg0}. Finally, in the section \ref{sec:dtpt-corr-expl} we review known rationality conjectures and
theorem for DT/PT invariants with descendents and state new DT/PT conjectures in non-equivariant setting (see section \ref{conj:Frank})
and one descendent insertion DT/PT conjecture in the fully-equivariant setting (see section \ref{sec:equiv-wall-cross}).

{\bf Acknowledgements:} I would like to thank Andrei Okounkov and Rahul Pandharipande for introducing me to
world of enumerative algebraic geometry and for many enlightening mathematical conversations.
The results in this paper  would not be found without  our joint
projects with them. I also would like to thank James Hagborg for his diligent work during 2017 REU program,
the equivariant DT/PT formula presented in the last section  is crowning jewel of the program.
I am  thankful to Andrei Negut and Tom Bridgeland for many useful conversations related to results of the paper.
I also would like to thank FIM-Institute for Mathematical Research at ETH, Zurich for excellent working
conditions during my two-week visit during Fall of 2018.
This paper is based upon work supported by the National Science Foundation under Grant No. 1440140, while the author was in residence at the Mathematical Sciences Research Institute in Berkeley, California during the Spring semester of 2018.
The author was partially supported by NSF CAREER grant
DMS-1352398 and NSF grant DMS-1760373. 


\section{EGL formulas: integral presentation}
\label{sec:egl-form-integr}

The key geometric component of our computation is inductive formula for computation of the tautological
 integrals over \(\Hilb_n(S)\) from \cite{EllingsrudGoettscheLehn99}.
We recall the details of the construction below.

\subsection{Nested Hilbert schemes}
\label{sec:nest-hilb-schem}

In this section $S=\CC^2$ but most of statements are valid for any
$S$. Two-dimensional torus $T=(\CC^*)^2$ acts on $S$. I work
$T$-equivariantly. The tangent weights at the origin of $\CC^2$
are $t_1,t_2$.

Central object in the computation is the one step nested Hilbert
scheme $\Hilb_{n+1,n}(S)$ of points on the surface $S$. I follow the
notations from \cite{EllingsrudGoettscheLehn99}.  The scheme  $\Hilb_{n+1,n}(S)$  consists of
the pairs of ideals $I\in \Hilb_{n+1}(S)$, $J\in \Hilb_{n}(S)$ such that
$I\subset J$. It is smooth scheme. If $U_n\subset \Hilb_n(S)\times S$
is a universal subscheme of $\Hilb_n(S)$:
$$ (I,z)\in U_n \mbox{ iff } z\in \supp(I)$$
then $\Hilb_{n+1,n}(S)$ is blow up of $\Hilb_n(S)\times S$ along $U_n$.
Notice $U_n$ itself is  singular (and is not lci) for $n>2$.

The following map are important for our constructions:
$$\Psi:\Hilb_{n+1,n}( S)\to \Hilb_{n+1}(S)$$ is
$n$-fold finite cover. The map
$$\Phi: \Hilb_{n+1,n}(S)\to \Hilb_n(S)$$ has $2$ dimensional fibers.
The support of $J/I$ is a point and the map:
$$\rho: \Hilb_{n+1,n}(S)\to S,\quad (I,J)\mapsto \supp(J/I)$$ is well defined.
The maps $\rho$ and $\Phi$ compose the birational map:
$$\Sigma=(\Phi,\rho): \Hilb_{n+1,n}(S)\to \Hilb_n(S)\times S.$$

\subsection{Sheaves and tautological classes}

The universal ideal sheaf $\mathcal{I}_n$ over $\Hilb_n(S)\times S$ fits into the
exact sequence:
$$0\to \mathcal{I}_n\to \mathcal{O}\to \mathcal{O}_n\to 0.$$
We obtain rank $n$ vector bundle on $\mathcal{O}^{[n]}$ by pushing
forward the sheaf $\mathcal{O}_n$
on $\Hilb_n(S)$. That is
$$\pi_{*}(\mathcal{O}_n)=\mathcal{O}^{[n]},$$
where $\pi$ is a projection on the first factor of $\Hilb_n(S)\times S$.

The following line bundle:
$$ \mathcal{L}:=\Psi^*(\mathcal{O}^{[n+1]})/\Phi^*(\mathcal{O}^{[n]}) $$ is central for the computations below.
In \cite{EllingsrudGoettscheLehn99} it is described as a tautological line bundle.

Let us denote by $\sigma_i$ the integral of charcteristic class
$\sigma_i=\pi_*(\ch_{i+2}(\mathcal{O}_n))\in H^*_T(\Hilb_n(S))$. The equivariant GRR implies
\begin{equation}\label{eq:ChOn}
\sigma=\sum \sigma_i=\ch(O^{[n]})\pi_*(1-\ch(T_S)+\ch(K_S^\vee))=
\ch(O^{[n]})(1-e^{t_1})(1-e^{t_2})/(t_1t_2).
\end{equation}

It is known that $\sigma_i$ generate cohomology ring of $\Hilb_n(S)$.


\subsection{Relations between sheaves and characteristic classes}

In this section we describe the geometric input in our construction.

\begin{prop}\cite{EllingsrudGoettscheLehn99} For  $k>0$ we have
\begin{equation}\label{eq:pushf_c1}\Sigma_*(\cc_1(\mathcal{L})^k)=(-1)^k \cc_k(\mathcal{O}_n).\end{equation}
\end{prop}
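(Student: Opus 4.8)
The plan is to recognize the pushforward along $\Sigma$ as a Segre-class computation dictated by the blow-up structure already recorded in the excerpt. First I would make the identification $\Hilb_{n+1,n}(S)=\mathrm{Bl}_{U_n}(\Hilb_n(S)\times S)=\mathbb{P}(\mathcal{I}_n)$ precise, where $\mathbb{P}(\mathcal{I}_n)$ is the projectivization (Proj of the Rees algebra) of the universal ideal sheaf $\mathcal{I}_n\subset\mathcal{O}$ on $\Hilb_n(S)\times S$; over the open locus where the new point avoids $\supp(J)$ this map is an isomorphism, which is the source of the birationality of $\Sigma$. The purpose of this identification is to pin down $\mathcal{L}$: the fibre of $\Sigma$ over $(J,x)$ is the space of one-dimensional quotients $J/I=\mathbb{C}_x$ of $J\otimes k(x)$, and $\mathcal{L}$ is exactly the tautological quotient line bundle $\mathcal{O}_{\mathbb{P}(\mathcal{I}_n)}(1)$. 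On the blow-up this is the ideal sheaf of the exceptional divisor, $\mathcal{L}=\mathcal{O}(-E)$, so that $\cc_1(\mathcal{L})=-[E]$.

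The second step is formal intersection theory on the blow-up. From $\cc_1(\mathcal{L})^k=(-1)^k[E]^k$, the projection formula together with the self-intersection relation $\mathcal{O}_E(E)=\mathcal{O}_{\mathbb{P}(C)}(-1)$ for the exceptional divisor of a blow-up (here $C=C_{U_n}(\Hilb_n(S)\times S)$ is the normal cone) reduces $\Sigma_*[E]^k$ to the Segre class of the centre. Fulton's expression of $s(U_n,\Hilb_n(S)\times S)$ through the blow-up then gives $\Sigma_*[E]^k=(-1)^{k-1}s_k(U_n,\Hilb_n(S)\times S)$, whence
\[
  \Sigma_*\big(\cc_1(\mathcal{L})^k\big)=-\,s_k\big(U_n,\Hilb_n(S)\times S\big).
\]
It is cleanest to package the whole claim as the generating identity $\Sigma_*\big(1-u\,\cc_1(\mathcal{L})\big)^{-1}=\sum_{k\ge0}(-1)^k\cc_k(\mathcal{O}_n)\,u^k$, which is the statement I really want to establish.

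The third step identifies this Segre class with the Chern classes of $\mathcal{O}_n$. The exact sequence $0\to\mathcal{I}_n\to\mathcal{O}\to\mathcal{O}_n\to0$ gives $\cc(\mathcal{I}_n)=\cc(\mathcal{O}_n)^{-1}$, so that the total Segre class of $\mathcal{I}_n$ equals $\cc(\mathcal{O}_n)$; matching graded pieces and carrying the sign through $\mathcal{L}=\mathcal{O}(-E)$ turns the displayed formula into $s_k(U_n,\cdots)=(-1)^{k+1}\cc_k(\mathcal{O}_n)$, i.e.\ $\Sigma_*\cc_1(\mathcal{L})^k=(-1)^k\cc_k(\mathcal{O}_n)$, the asserted equality. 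As sanity checks, for $k=1$ both sides vanish because $\mathcal{O}_n$ is supported in codimension two, and for $k=2$ one recovers $\Sigma_*\cc_1(\mathcal{L})^2=-[U_n]=\cc_2(\mathcal{O}_n)$.

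The hard part is exactly this last identification, since $U_n$ is singular and not a local complete intersection for $n>2$ (as the excerpt notes), so one may not invoke $s(U_n,\cdot)=\cc(N)^{-1}\cap[U_n]$ for a normal bundle $N$, and a priori the subscheme Segre class is finer than the $K$-theory class of $\mathcal{O}_n$. To control this I would avoid computing $s(U_n,\cdot)$ intrinsically: choosing a locally free surjection $\mathcal{E}\twoheadrightarrow\mathcal{I}_n$ embeds $\mathbb{P}(\mathcal{I}_n)$ into the honest projective bundle $\mathbb{P}(\mathcal{E})$ compatibly with $\mathcal{O}(1)$, where the pushforward of powers of $\cc_1(\mathcal{O}(1))$ is the ordinary projective-bundle Segre calculus; one then checks that the excess contribution supported over $U_n$ affects the answer only through $\cc(\mathcal{O}_n)$. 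Equivalently, since $\Hilb_{n+1,n}(S)$ is smooth and $\Sigma$ is birational, both $\Sigma_*\cc_1(\mathcal{L})^k$ and $(-1)^k\cc_k(\mathcal{O}_n)$ are classes supported on $U_n$ that agree on the dense locus where $\mathcal{I}_n$ is locally free, and I would use the GRR computation of $\ch(\mathcal{O}_n)$ in \eqref{eq:ChOn} to match the remaining corrections. This is the only place where the special geometry of the Hilbert scheme of a surface, rather than generic blow-up formalism, is needed.
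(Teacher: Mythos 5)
Your route sits differently from the paper than you might expect: the paper does not prove this proposition at all — it quotes it from \cite{EllingsrudGoettscheLehn99}, and in the proof of Proposition~\ref{prop:Hilb-int} uses only its localization shadow, the fixed-point identity expressing $\cc_k(\calO_{n-1})_\mu/(t_1t_2)e_\mu$ as a sum over $\lambda\supset\mu$. What you outline is essentially a reconstruction of the original EGL argument via the projectivization of the universal ideal sheaf, so that is the right benchmark. Your first two steps are correct: the functor of points gives $\Hilb_{n+1,n}(S)=\mathbb{P}(\mathcal{I}_n)$ with $\mathcal{L}\cong\mathcal{O}(1)$ of fiber $J/I$; the surjection $\mathrm{Sym}(\mathcal{I}_n)\twoheadrightarrow\bigoplus_d\mathcal{I}_n^d$ embeds the blow-up into $\mathbb{P}(\mathcal{I}_n)$, and irreducibility of $\Hilb_{n+1,n}(S)$ in the expected dimension $2n+2$ forces equality, whence $\mathcal{L}=\mathcal{O}(-E)$; Fulton's blow-up formula for Segre classes then gives $\Sigma_*\bigl(\cc_1(\mathcal{L})^k\bigr)=-s_k(U_n,\Hilb_n(S)\times S)$ for $k\ge 1$ with no lci hypothesis.

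The genuine gap is in your third step, and one of your two proposed repairs is a non-argument. Saying that $\Sigma_*\cc_1(\mathcal{L})^k$ and $(-1)^k\cc_k(\calO_n)$ are ``supported on $U_n$ and agree on the dense locus where $\mathcal{I}_n$ is locally free'' proves nothing: classes supported on $U_n$ restrict to zero off $U_n$, so the agreement is vacuous, and such classes are not determined by that restriction; likewise \eqref{eq:ChOn} is a pushforward to $\Hilb_n(S)$ and has already discarded exactly the fiberwise corrections you would need. Your other repair is the right one but must be completed as follows (this is in effect EGL's proof). Since $\calO_n$ is flat of relative dimension $0$ over the smooth $\Hilb_n(S)$, the subscheme $U_n$ is Cohen--Macaulay of codimension $2$, so by Auslander--Buchsbaum the ideal sheaf has projective dimension one: there is a resolution $0\to A\to \mathcal{E}\to\mathcal{I}_n\to 0$ with $A,\mathcal{E}$ locally free (after a twist). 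Then $\mathbb{P}(\mathcal{I}_n)\subset\mathbb{P}(\mathcal{E})$ is the zero scheme of a section of $A^\vee\otimes\mathcal{O}_{\mathbb{P}(\mathcal{E})}(1)$, and because $\Hilb_{n+1,n}(S)$ is irreducible of exactly the expected dimension, its class equals the top Chern class $c_{\mathrm{top}}\bigl(A^\vee\otimes\mathcal{O}(1)\bigr)$ — no excess-intersection analysis is needed, which is precisely the point your sketch leaves open. Pushing $\xi^k\,c_{\mathrm{top}}(A^\vee\otimes\xi)$ down by the projective-bundle formula yields the degree-$k$ part of $c(A^\vee)/c(\mathcal{E}^\vee)$, which is $(-1)^k$ times the degree-$k$ part of $c(A)/c(\mathcal{E})=\cc(\calO_n)$, i.e.\ exactly \eqref{eq:pushf_c1}; your Segre identity $s_k(U_n,\cdot)=(-1)^{k+1}\cc_k(\calO_n)$ then comes out as a corollary rather than an input. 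Alternatively, in the paper's actual setting $S=\CC^2$ the identity can be verified equivariantly by localization, which is the only form in which the paper uses it.
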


I prefer to work with the maps $\Phi$ and $\Psi$ and to avoid
$\Sigma$, so we reformulate the statement from \cite{EllingsrudGoettscheLehn99}.
For  $k>0$ we have
\begin{equation*}\Phi_*(\cc_1(\mathcal{L})^k)=(-1)^k \pi_*(\cc_k(\mathcal{O}_n)).\end{equation*}
Now let us notice that the relation \eqref{eq:ChOn} implies that  inside \(H^*(S^{[n]}\times S)\) we have relation:
\[\ch(\calO_n)=\pi^*(\ch(\calO^{[n]})-\ch(\calO^{[n]}\otimes t_1)-\ch(\calO^{[n]}\otimes t_2)+\ch(\calO^{[n]}\otimes t_1t_2)).\]
Thus RHS of the equation \eqref{eq:pushf_c1} could be rewritten with the help of:
\begin{equation}\label{eq:pi_cOn}
  \cc(\calO_n)=\pi^*\left(\frac{\cc(\calO^{[n])})\cc(\calO^{[n]}\otimes t_1t_2)}{\cc(\calO^{[n]}\otimes t_1)\cc(\calO^{[n]}\otimes t_2)}\right).\end{equation}

The last formula provides us with the method of reducing of the integral of the tautological classes \(\ch_k(\calO^{[n]})\) over
\(\Hilb_n(S)\) to the integral of some expression of \(\ch_i(\calO^{[n-1]})\) over \(\Hilb_{n-1}(S)\). Multiple iteration of this induction could be
encoded as contour integral. We use notations of \cite{Negut15} where this formula appeared. Let us define \[\cc(V)[u]=\prod_{i=1}^m(1-ur_i),\]
where \(r_i\) are Chern roots of the vector bundle \(V\). Let us also fix notation for
\[\omega(z):=\frac{z(z-t_1-t_2)}{(z-t_1)(z-t_2)}.\]
The following equation could be deduced from the results of \cite{Negut15} by applying RR systematically to translate the results of \cite{Negut15} from
K-theory to homology. However, we would like to show how one can derive this
formula from the result of \cite{EllingsrudGoettscheLehn99}.
\begin{prop}\label{prop:Hilb-int} For any \(m,n\ge 0\) we have:
  \[\int_{S^{[n]}}\prod_{i=1}^m \cc[u_i](\calO^{[n]})=\frac{1}{n!}\int \left(\prod_{1\le i<j\le n}\omega(z_i-z_j)\right)\prod_{k=1}^n\left( \prod_{l=1}^m(1-u_lz_k)\right)Dz\]
  where \(Dz:=\prod_{i=1}^n\frac{dz_i}{z_i}\) and the integral in LHS is taken over the contour \(|z_i|=R_i\) with \(R_1\gg R_2\gg\dots\gg R_n\gg 0\).
  
\end{prop}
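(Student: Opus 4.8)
The plan is to prove a slightly stronger statement by induction on $n$, and to read off the proposition by specialization. Since the classes $\cc_k(\calO^{[n]})$ generate $H^*_T(\Hilb_n(S))$ and $\prod_{k=1}^n\prod_{l=1}^m(1-u_lz_k)=\prod_l\cc[u_l](\calO^{[n]})$ is, as $m$ and the $u_l$ vary, their multiplicative generating function, the proposition is equivalent to the assertion that for every symmetric function $P$ of the Chern roots $z_1,\dots,z_n$ of $\calO^{[n]}$ one has $\int_{S^{[n]}}P=\frac1{n!}\int\big(\prod_{i<j}\omega(z_i-z_j)\big)P\,Dz$. I would prove this form, which is what makes the induction close up: the factor created at each step is symmetric but is not a pure product. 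The base cases $n=0$ (a point) and $n=1$ (where $S^{[1]}=\CC^2$) fix the overall normalization.

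The engine of the induction is a generating-function repackaging of \eqref{eq:pushf_c1}. Setting $L=\cc_1(\calL)$ and using $\Phi_*(L^k)=(-1)^k\pi_*\cc_k(\calO_n)$ equivariantly for all $k\ge0$ — the $k=0$ term, which is invisible non-equivariantly because the fibres of $\Phi$ are two-dimensional, contributes the equivariant volume and must be retained — I would sum the geometric series to obtain $\Phi_*\big(\tfrac{1}{1-yL}\big)=\pi_*\cc[y](\calO_n)$. Feeding in \eqref{eq:pi_cOn}, pulling the ratio out of $\pi_*$ against $\pi_*(1)=1/(t_1t_2)$, and substituting $z_{n+1}=1/y$, the twisted ratio collapses precisely into a product of the universal factors:
\[
\Phi_*\Big(\frac{1}{1-yL}\Big)=\frac{1}{t_1t_2}\prod_{k=1}^n\omega(z_{n+1}-z_k).
\]
This is the step where $\omega$ emerges: the existing Chern roots $z_1,\dots,z_n$ of $\calO^{[n]}$ become the old integration variables and $z_{n+1}=1/y$ is the new one.

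For the recursion I would use the tautological sequence $0\to\Phi^*\calO^{[n]}\to\Psi^*\calO^{[n+1]}\to\calL\to0$, which multiplies Chern polynomials to give $\cc[u](\Psi^*\calO^{[n+1]})=\cc[u](\Phi^*\calO^{[n]})\,(1-uL)$. Integrating over $\Hilb_{n+1,n}(S)$ and using that $\Psi$ is finite, so $\int_{\Hilb_{n+1}}\alpha=(\deg\Psi)^{-1}\int_{\Hilb_{n+1,n}}\Psi^*\alpha$, together with the projection formula for $\Phi$, turns the level-$(n+1)$ integral into the level-$n$ integral of $\prod_l\cc[u_l](\calO^{[n]})\cdot\Phi_*\prod_l(1-u_lL)$. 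Writing $\prod_l(1-u_lL)=\oint_{y=0}\tfrac{dy}{y}\prod_l(1-u_l/y)\tfrac{1}{1-yL}$ and inserting the identity above converts $\Phi_*\prod_l(1-u_lL)$ into a one-variable contour integral in $z_{n+1}$ carrying the weight $\prod_l(1-u_lz_{n+1})$ and the factor $\prod_{k\le n}\omega(z_{n+1}-z_k)$. Applying the strengthened inductive hypothesis to the remaining symmetric integrand over $S^{[n]}$ then produces $\prod_{i<j\le n}\omega(z_i-z_j)$ and the nested contours in $z_1,\dots,z_n$.

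The main obstacle is the final reconciliation of constants and contour conventions. I expect the delicate points to be: that $(\deg\Psi)^{-1}$ combined with the $1/(t_1t_2)$ coming from $\pi_*(1)$ and the inductive $1/n!$ must produce exactly $1/(n+1)!$; that the new variable, which enters through the substitution $z_{n+1}=1/y$ as the largest (outermost) contour, must be relabelled to index $1$ so that the emergent $\prod_k\omega(z_{n+1}-z_k)$ aligns with the pairs $(1,j)$ of the symmetric product $\prod_{1\le i<j\le n+1}\omega(z_i-z_j)$; and that this relabelling upgrades $\tfrac1{n+1}\prod_{i<j\le n}\omega$ to $\tfrac1{(n+1)!}\prod_{i<j\le n+1}\omega$ while respecting the ordering $R_1\gg\cdots\gg R_{n+1}$. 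I would resolve these simultaneously through the iterated-residue-at-infinity interpretation, reading $\oint\tfrac{dz_{n+1}}{z_{n+1}}$ as the residue at $z_{n+1}=\infty$ so that the poles of $\omega(z_{n+1}-z_k)$ at $z_{n+1}=z_k+t_i$ stay interior and only the behaviour at infinity is picked up. Checking these normalizations against the $n=1$ base case, together with the careful equivariant treatment of the $k=0$ term of \eqref{eq:pushf_c1}, is where essentially all of the work lies.
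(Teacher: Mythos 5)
Your proposal is correct and rests on the same backbone as the paper's proof --- induction through the nested Hilbert scheme $\Hilb_{n+1,n}(S)$, with the EGL pushforward \eqref{eq:pushf_c1} as the key input and the Chern/Segre ratio \eqref{eq:pi_cOn} as the source of $\omega$ --- but you execute the inductive step by a genuinely different method. The paper proves its one-step reduction \eqref{eq:intSn} by equivariant localization: it expands both sides over torus-fixed pairs $\lambda\supset\mu$, identifies the new linear factor with the weight of $\calL$ at $(\lambda,\mu)$, and uses EGL restated as the fixed-point identity $\mathrm{c}_k(\calO_{n-1})_\mu/\big((t_1t_2)e_\mu\big)=\sum_{\lambda\supset\mu}\cnt(\lambda\setminus\mu)^k/e_{\lambda,\mu}$. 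You instead stay global: Whitney for $0\to\Phi^*\calO^{[n]}\to\Psi^*\calO^{[n+1]}\to\calL\to0$, the projection formula along $\Phi$, and the summed-up identity $\Phi_*\big(\tfrac{1}{1-yL}\big)=\tfrac{1}{t_1t_2}\prod_{k}\omega(z_{n+1}-z_k)$ with $z_{n+1}=1/y$, which is a clean repackaging the paper never writes down. Your route buys two things the paper leaves implicit: first, the strengthened induction hypothesis for arbitrary symmetric functions of the Chern roots, which is genuinely needed to iterate, since after one step the integrand contains the non-product factors $\omega(r_i-z)$ (the paper's localization handles this silently because everything is already expressed in fixed-point weights, but its displayed \eqref{eq:intSn} covers only products of $\cc[u]$'s); second, honest bookkeeping of the equivariant $k=0$ term $\Phi_*(1)=\pi_*(1)=1/(t_1t_2)$, which you correctly insist on retaining. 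One caution on your normalization plan: the leftover $1/(t_1t_2)$ per step is real and does \emph{not} cancel against $(\deg\Psi)^{-1}$ into $1/(n+1)!$ --- your own base case detects this, since $\int_{S^{[1]}}1=1/(t_1t_2)$ while the contour integral of $dz/z$ gives $1$, so the clean statement carries an overall $(t_1t_2)^{-n}$ (equivalently, the measure should be read as $Dz/(t_1t_2)^n$). This factor is visible in the paper's own fixed-point identity but is dropped from the displayed \eqref{eq:intSn} and from the proposition's stated normalization, so the discrepancy you would hit is a slip in the paper's statement rather than a gap in your argument. The paper's localization route has the compensating advantage that its weight-by-weight computation is exactly the template reused for the PT vertex in theorem~\ref{thm:mainPT}.
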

\begin{proof}
  Our proof is inductive and essentially just a translation of the result of \cite{EllingsrudGoettscheLehn99} into an integral formula.
  We show:
  \begin{equation}\label{eq:intSn}
    \int_{S^{[n]}}\prod_{i=1}^m c[u_i](\calO^{[n]})=\int_{S^{[n-1]}}\int_{|z|=\epsilon}\prod_{i=1}^m(1-u_iz)\cc[u_i](\calO^{[n-1]})\cdot\prod_{i=1}^{n-1}\omega(r_i-z)Dz,
    \end{equation}
  where \(r_i\) are the Chern roots of the vector bundle \(\calO^{[n-1]}\).

  First let us observe that the map \(\Psi: \Hilb_{n,n-1}(S)\to \Hilb_n(S)\) is a \(n\)-branched  cover and thus \[\int_{S^{[n]}}X=\frac{1}n\int_{S^{[n,n-1]}}\Psi^*(X).\]
  On the other hand we can use localization description of the pull-back: if \(\pi:Z\to Y\) is map of smooth varieties and \(\pi\) sends
  an isolated torus-fixed point \(z\in Z\) to the isolated torus fixed point \(y\in Y\) then in localized cohomology we have
  \[\pi^*\left(\frac{[y]}{e(T_y)}\right)=\frac{[z]}{e(T_z)}.\]

  The torus-fixed points of \(\Hilb_n(S)\) are monomial ideals, naturally labeled by the partitions \(\lambda\) of size \(n\) and the torus-fixed points of
  \(\Hilb_{n,n-1}(S)\) are pairs of the monomial ideals labeled by the partitions \(\lambda\supset\mu\). Thus the LHS of \eqref{eq:intSn} can be rewritten as
  \begin{equation}\label{eq:psi}
  \sum_{|\lambda|=n}\frac{1}{e_\lambda}\prod_{k=1}^m\prod_{ij\in\lambda}(u_k-\cnt(ij))=\frac{1}{n}\sum_{\lambda,\mu}\frac{1}{e_{\lambda,\mu}}
    \prod_{k=1}^m\prod_{ij\in \lambda}(u_k-\cnt(ij)).\end{equation}
  where \(e_{\dots}\) are the Euler classes of the corresponding normal bundles and \(\cnt(ij)=\cnt_\lambda(ij)\) is the content of the square inside of the Young
  diagram:
  \[\cnt(ij)=it_1+jt_2.\]
  The product under the integral can be rewritten in slightly different form:
  \[(u-\cnt(l,m))\prod_{ij\in\mu}(u-\cnt(ij)),\]
  where \((l,m)=\lambda\setminus\mu\) and it is exactly the weight of the line bundle \(\mathcal{L}\) at the point \((\lambda,\mu)\in S^{[n,n-1]}\).
  Now let us recall the localization formula for the push-forward. If \(\pi:Z\to Y\) is map of smooth varieties and \(\pi(z)=y\), \(z,y\) are
  isolated torus-fixed points, then \(\pi_*([z])=[y]\). In these terms the main geometric input of \cite{EllingsrudGoettscheLehn99} can be restated
  as a combinatorial formula for the Chern class of \(\calO_{n-1}\):
  \[\frac{\mathrm{c}_k(\calO_{n-1})_\mu}{(t_1t_2)e_\mu}=\sum_{\lambda}\frac{\left(\cnt(\lambda\setminus\mu)\right)^k}{e_{\lambda,\mu}}\]
  with the sum over all \(\lambda\) such \(\mu\subset\lambda\).
  Since \[\int_{S^{[n,n-1]}}X=\int_{S^{[n-1]}}\Phi_*(X)\] we can rewrite the sum in RHS of (\ref{eq:psi}) as
  \[\int Dz\sum_\mu\frac{1}{e_\mu}\prod_{i=1}^m\left( \cc_{s_i}(\calO^{[n-1]})(1-u_iz)\right)\pi_*(\sum_k \mathrm{c}_{k}(\calO_{n-1})z^{-k}).\]
  Since the total Segre class is the inverse of the total Chern class, the formula (\ref{eq:pi_cOn}) explains the appearance of \(\omega\) in
  our inductive formula.
\end{proof}

Let us fix notation \(\Omega(z_1,\dots,z_n)\) for the for the integration
weight in the previous theorem:
\[\Omega(z_1,\dots,z_n)= \prod_{k=1}^n\frac{dz_i}{z_i}\prod_{1\le i<j\le n}\omega(z_i-z_j)\]
\begin{remark}\label{rem:ord}
  The iterated integral in the previous theorem has a clear algebraic interpretation as an iterated residue. That is the
  integral can replaced with \(\Res_{z_1=\infty}\Res_{z_2=\infty}\dots\Res_{z_n=\infty}\). The order of taking residues is important, switching
  order of taking residues changes the answer.
  \end{remark}

\section{Localization as contour integral}
\label{sec:local-as-cont}

\subsection{PT descendents: definitions}
\label{sec:pt-desc-defin}

The  moduli space of pairs \(P_n(X,\beta)\) parametrizes stable pairs \([\calO_X\to \calF]\) where \(\calF\) is
a pure one-dimensional sheaf \([\supp(\calF)]=\beta\) and the cokernel of the map has zero-dimesional support,
\(\chi([\calO_X\to \calF])=n\).

If \(\dim X=3\) then the  moduli space \(P_n(X,\beta)\) poses a virtual cycle \([P_n(X,\beta)]\) of degree \(\beta\cdot c_1\).
The  sheaves \(\calF\) glue into  a universal sheaf \(\mathbb{F}\)  over the product \(P_n(X,\beta)\). Thus we can define
the descendent classes from the introduction by
\[\ch_k(\alpha)=\pi_*^X(\ch_k(\mathbb{F})\cdot \alpha),\quad \alpha\in H^*(X),
\]
where \(\pi^X\) is the projection along \(X\).

\subsection{One leg invariants with descendents}
\label{sec:one-leg-invariants}

In this section we review the localization formula for the  bare one-legged PT vertex and we derive a hypergeometric presentation of
the localization that could be translated to a residue formula for the relative PT invariants.

Before we state our main formula let us
choose a basis in \(H^*_{T}(\Hilb_n(\CC^2))\). The localization formula implies that the monomials \[\cc_\lambda:=\prod_{i=1}^\ell \cc_{\lambda_i}(\calO^{[n]}),\quad
|\lambda|=\sum_{i}\lambda_i=n.\]
form a basis in the equivariant cohomology.
In the original paper \cite{MaulikNekrasovOkounkovPandharipande06a} a formula for the equivariant localization of DT invariants is  proven.
The answer is given in terms of bare vertex. The analogous formalism for PT theory was developed in \cite{PandharipandeThomas09}. For
exact geometric meaning of the bare vertex we refer to \cite{PandharipandeThomas09}. Here we just remind   the bare vertex
\[\mathrm{Z}^{\bare}_\PT(\prod_i\ch_i|\gamma), \quad \gamma\in H^*_T(\Hilb_n(\CC^2))\] is an element of the ring of formal power-series
\(\CC(t_1,t_2,t_3)[q^{-1}][[q]]\) such that we can evaluate the absolute PT invariants of the local \(X=\calO(d_1)\oplus\calO(d_2)\to\mathbb{P}^1\) by
\begin{equation}\label{eq:locP1}
  \langle \prod_i\ch_{m_i}([0])\prod_{j}\ch_{l_j}([\infty])\rangle_{n[\mathbb{P}^1]}^{\PT}=\sum_{s,r}\mathrm{Z}^{\bare}_{\PT}(\prod_i\ch_{m_i}|\gamma_s)
  \mathrm{E}^{d}(\gamma_s,\gamma_r)
\left.  \mathrm{Z}^{\bare}_{\PT}(\prod_j\ch_{l_j}|\gamma_r)\right|_{t_i=s_i}\end{equation}
where \(\gamma_i\) is a basis of \(H^*_{T}(\Hilb_n(\CC^2))\), \(s_3=-t_3,s_2=t_2+d_2t_3,s_1=t_1+d_1t_3\) and edge-term in the basis of
the torus fixed points is given by:
\[\mathrm{E}^{d_1,d_2}(\lambda,\mu)=\delta_\mu^\lambda\cdot \mathrm{Exp}( -\mathrm{E}^{d}(\lambda)), \quad \mathrm{E}^d(\lambda)=
t_3^{-1}\frac{\mathrm{F}_e(t_1,t_2)}{1-t_3^{-1}}-\frac{\mathrm{F}_e(t_1t_3^{-d_1},t_2t_3^{-d_2})}{1-t_3^{-1}},\]
with \(\mathrm{F}_e\) defined by \eqref{eq:Fe} and \(\mathrm{Exp}\) is a plethistic exponential   \eqref{eq:pleth}.

We also adopt the following definition of Pochhammer symbol:
\[ [x]_n=\frac{\Gamma(x+n)}{\Gamma(x)}.\]

\begin{theorem}\label{thm:mainPT} For any partition \(\lambda\) of size \(n\) we have an integral formula for the bare PT one-leg vertex
  \[\mathrm{Z}^{\bare}_{\PT}( \prod_{r=1}^m \ch[u_r]|c_\lambda)^{\PT}_{n[\mathbb{P}^1]}=\sum_{\vec{k}\in \ZZ_+^n} \mathrm{Z}^{\bare}_{\PT,\vec{k}}( \prod_{r=1}^m \ch[u_r]|c_\lambda)q^{|\vec{k}|}\]
  \[\mathrm{Z}^{\bare}_{\PT,\vec{k}}( \prod_{r=1}^m \ch[u_r]|c_\lambda)=
    \frac{1}{n!}\int  \Omega(z) e_\lambda(z)  \sum_{\vec{k}\ge 0}\Pi(\vec{k},z)
    \prod_{r=1}^m\sum_{i=1}^n \frac{e^{t_3u_r(z_i+k_i)}}{\delta(u_r)},\]
  \[\Pi(\vec{k},z)=\prod_{i=1}^n\frac{[-z_i-a_1-a_2]_{k_i}
    }{[-z_i+1]_{k_i}}\prod_{1\le i<j\le n}
    F^{-1}_{k_j-k_i}(z_i-z_j),\]
  \[F_k(z)=
    \frac{[z-a_1]_{k}[z-a_2]_{k}[z+a_1+a_2]_{k}}
    {[z+a_1]_{k}[z+a_2]_{k}[z-a_1-a_2]_k}\]
  where  \(\delta(z)^{-1}=(1-e^{zt_1})(1-e^{zt_2})\), \(a_i=t_i/t_3\) and the contour integration is the same as in the previous theorem.
\end{theorem}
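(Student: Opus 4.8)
The plan is to start from the virtual localization formula for the bare one-leg PT vertex and to recognize the resulting fixed-point sum over monomial cross-sections as an instance of the EGL residue integral of Proposition~\ref{prop:Hilb-int}. First I would enumerate the $\rtt$-fixed stable pairs contributing to $\mathrm{Z}^{\bare}_{\PT}(\prod_r\ch[u_r]|c_\lambda)$ in class $n[\PP^1]$. Each such pair is encoded by a two-dimensional partition $\mu$ with $|\mu|=n$ giving the monomial cross-section transverse to the $\PP^1$-direction --- equivalently the $T$-fixed point of $\Hilb_n(\CC^2)$ cut out on the relative divisor, against which $c_\lambda$ is evaluated --- together with a vector $\vec k\in\ZZ_{\ge 0}^n$ recording, box by box of $\mu$, the number of boxes added in the $t_3$-direction at the vertex. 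The holomorphic Euler characteristic of the pair is $|\vec k|$ up to a $\mu$-independent constant, which produces the grading $q^{|\vec k|}$ and matches the outer sum in the statement.

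Next I would evaluate the localization contribution of a fixed point $(\mu,\vec k)$. Writing the PT deformation--obstruction character $\mathsf V_{\mu,\vec k}$ as an alternating sum of torus weights following \cite{PandharipandeThomas09}, I would take its Euler class and normalize every weight by $t_3$, so that $a_i=t_i/t_3$, the ($t_3$-normalized) box contents become the variables $z_i$, and the integer heights $k_i$ appear as unit shifts. The infinite transverse $\CC^2$-directions of the leg contribute the Koszul factor $\delta(u_r)^{-1}=(1-e^{u_rt_1})(1-e^{u_rt_2})$ to each descendent, while the box seen by $\ch_k$ in column $i$ sits at equivariant weight $t_3(z_i+k_i)$; together these give the descendent factor $\sum_i e^{t_3u_r(z_i+k_i)}/\delta(u_r)$. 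The products of stacked tangent weights along each column telescope into the Pochhammer ratios $[-z_i-a_1-a_2]_{k_i}/[-z_i+1]_{k_i}$, and the off-diagonal (column--column) weights assemble into the factors $F^{-1}_{k_j-k_i}(z_i-z_j)$; the essential point is to split $\mathsf V_{\mu,\vec k}$ cleanly into a part depending on $\mu$ alone and a part carrying the $\vec k$-dependence, so that the former is exactly the $\Hilb_n(\CC^2)$-localization measure anticipated by EGL and the latter becomes $\Pi(\vec k,z)$.

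I would then invoke Proposition~\ref{prop:Hilb-int}. Extracting from the identity of generating functions there the coefficient of $\prod_i u^{\lambda_i}$ converts it into
\[
  \sum_{|\mu|=n}\frac{1}{e(T_\mu\Hilb_n(\CC^2))}\,c_\lambda(\mu)\,G(z(\mu))
  =\frac{1}{n!}\int\Omega(z)\,e_\lambda(z)\,G(z),
\]
so the sum over monomial cross-sections weighted by $1/e(T_\mu)$ and by the fixed-point value $c_\lambda(\mu)$ is replaced by the iterated residue $\frac1{n!}\int\Omega(z)\,e_\lambda(z)$, with the box contents of $\mu$ promoted to the free variables $z_1,\dots,z_n$. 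Applying this with $G$ the $\vec k$-dependent part of the PT weight carries $\Pi(\vec k,z)$ and the descendent factors under the integral and yields the claimed formula; the contour $|z_i|=R_i$ together with Remark~\ref{rem:ord} fixes the order of the iterated residues at $z_i=\infty$.

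The step I expect to be the main obstacle is the clean separation in the second paragraph: verifying that the $\mu$-only part of the PT deformation--obstruction Euler class is literally the EGL measure is delicate, because the pairwise factors $\omega(z_i-z_j)$ of $\Omega$ and the pairwise factors $F_{k_j-k_i}(z_i-z_j)$ of $\Pi$ both originate from a single alternating character and must be disentangled consistently. Concretely one has to check that the minimal ($\vec k=0$) contribution, for which $\Pi\equiv 1$, together with $e_\lambda(z)$ reproduces precisely the measure $\Omega(z)\,e_\lambda(z)$ of Proposition~\ref{prop:Hilb-int}. I would control this by first settling the single-column case $n=1$, where the Pochhammer ratios come directly from stacking boxes along one leg and no $\omega$-factor is present, and then handling the pairwise $F$-factors by comparing the off-diagonal weights of $\mathsf V_{\mu,\vec k}$ with $\omega$ one box-pair at a time.
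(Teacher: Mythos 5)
Your overall skeleton --- virtual localization for the one-leg vertex, identification of the $\mu$-dependent part of the weight with the EGL measure, and conversion of the $\mu$-sum into the iterated residue of Proposition~\ref{prop:Hilb-int} --- is the same as the paper's, which organizes the computation instead as a one-box-at-a-time induction (peeling off a column, reducing $n$ to $n-1$ in lockstep with \eqref{eq:intSn}, with the explicit weight-difference computation \eqref{eq:diffVV} producing the Pochhammer and $F$-factors). But your proof has a genuine gap at the very first step: the torus-fixed stable pairs over the leg $\mu$ are \emph{not} labeled by arbitrary $\vec k\in\ZZ_{\ge 0}^n$. The boxes must form a $\CC[x_1,x_2,x_3]$-submodule $M_\pi$ with $\calO_\mu\subset M_\pi\subset M_\mu$, which forces the nesting inequalities \eqref{eq:ptt-cond} on the column heights $k_{ij}$. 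The theorem nonetheless sums over all $\vec k\ge 0$ without constraint, so the heart of the matter --- entirely absent from your proposal, and contradicted by your claim that every $\vec k$ encodes a fixed point --- is to show that $\Pi(\vec k,z)$, evaluated at the contents of $\mu$, vanishes precisely on the non-admissible $\vec k$ and develops no poles. This is delicate: the dangerous factors collect into $[0]_{k_{i_0j_0}-k_{i_0-1,j_0}}[0]_{k_{i_0j_0}-k_{i_0,j_0-1}}/[0]_{k_{i_0j_0}-k_{i_0-1,j_0-1}}$ as in \eqref{eq:prob-term}; when one nesting inequality fails the numerator vanishes while the other two factors stay finite, and when both fail one must check that the double zero of the numerator compensates the possible simple pole of the denominator (using admissibility of the smaller configuration $\pi'$). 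Without this analysis the claimed identity between the fixed-point sum and $\sum_{\vec k\ge 0}\Pi(\vec k,z)$ is false as stated.

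A secondary weak point: Proposition~\ref{prop:Hilb-int} is proved for products of Chern classes, i.e.\ for $G$ a symmetric \emph{polynomial} in the Chern roots (which suffices for arbitrary classes by linearity, since the $\sigma_i$ generate the cohomology). Your one-shot application with $G$ equal to the rational function $\Pi(\vec k,\,\cdot\,)$ times the descendent series does not follow formally, because iterated residues at $z_i=\infty$ of rational integrands are sensitive to the finite poles ($z_i-z_j$ at integer shifts of $\pm a_1,\pm a_2$), and the order of residues matters (Remark~\ref{rem:ord}). The paper avoids this by running its induction in parallel with the EGL induction, so that at each step $z_n$ is literally the equivariant weight of the line bundle $\calL=\calO^{[n]}/\calO^{[n-1]}$ and the residue extraction has the same geometric meaning as in the proof of \eqref{eq:intSn}. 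By contrast, the obstacle you flag --- disentangling the $\omega(z_i-z_j)$ factors of $\Omega$ from the $F_{k_j-k_i}(z_i-z_j)$ factors of $\Pi$, with the $\vec k=0$ case as a consistency check --- is real but is the more routine part, handled in the paper by the closed-form difference $\mathrm{V}_v(\pi)-\mathrm{V}_v(\pi')$ in \eqref{eq:diffVV}.
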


\subsection{Combinatorics of the torus-fixed locus}
\label{sec:comb-torus-fixed}

Before we prove our theorem we remind the setting of the equivariant localization for the relative invariants. The torus fixed points of
the moduli space \(P_\chi(\mathbb{P}^1\times \CC^2,n[\mathbb{P}^1])\) are naturally labeled by the pairs of inverted 3D partition of shape
\(\lambda\), and with the constraint on the number of boxes. Let us describe the corresponding combinatorial object.

We try to stay as close as it possible to the notations of \cite{PandharipandeThomas09}. Let us denote by \(\mathrm{Cyl}_{\lambda}\) the
infinite cylinder going in \(t_3\) direction and having cross-section \(\lambda\). The points inside the cylinder correspond exponents of the elements
the monomial basis of the quotient \[M_\lambda:=\CC[x_1,x_2]/I_\lambda\otimes \CC[x_3,x_3^{-1}],\quad
\calO_{\lambda}:=M_\lambda\cap \CC[x_1,x_2,x_3].\]

The restricted partition \(\pi\) of the shape \(\lambda\)
is the set of the exponents of the monomial basis of a \(\CC[x_1,x_2,x_3]\)-submodule of the submodule \[\calO_\lambda\subset M_\pi\subset M_\lambda\] such that the {\it size}
\(\ell(\pi):=\dim M_\pi/\calO_\lambda\) is finite. Given a restricted partition \(\pi\) the complement \(\mathrm{Cyl}_\lambda\setminus \pi\)  is
an infinite 3D partition.
Let us denote the set of restricted 3D partitions of the shape \(\lambda\) as \(\Pi^{\PT}_\lambda\) and \(\Pi^{\PT}_{\lambda,\chi}\) is the set of the restricted partitions
of shape \(\lambda\) and of size \(\chi\).

We also use
notation \(\hat{\pi}\) for  the support
of \(M_\pi/\calO_\lambda\).  For  \(\pi\in\Pi^{\PT}_\lambda\) the generating function
for the localiztion weights of Chern characters of  \(\ch_m([0])\) at the
torus-fixed point \(\pi\) is:
\[\ch(\pi)[z]=(1-e^{t_1z})(1-e^{t_2z})(1-e^{t_3z})\sum_{(i,j,k)\in \pi}e^{(it_1+jt_2+kt_3)z}.\]

Following notations of \cite{PandharipandeThomas09} we denote by \(\mathrm{F}_v(\pi)\in\ZZ(t_1,t_2,t_3)\) the generating function of the
points inside the partition \(\pi\). The expression \(\mathrm{F}_v(\pi)\) is the rational function of the equivariant parameters \(t_1,t_2,t_3\).
As in \cite{PandharipandeThomas09} we define the  generating functions \[\mathrm{Q}_e(\lambda)=\sum_{(i,j)\in\lambda} t_1^it_2^j,\quad
\mathrm{Q}_v(\pi)=\mathrm{F}_v(\pi)-\mathrm{Q}_e(\lambda)/(1-t_3).\] The last sum is a generating functions for the monomials in the basis
of the finite dimensional quotient \(Q_\pi:=M_\pi/\calO_\lambda\).

\subsection{Localization weight}
\label{sec:localization-weight}

Let us fix notation \(f\mapsto \bar{f}\)  for the involution \(\CC(t_1,t_2,t_3)\) defined by \(t_i\mapsto t_i^{-1}\). In these terms \cite{PandharipandeThomas09}
define the generating function for the equivariant weight of the torus-fixed point \(\pi\) inside the PT moduli space:
\begin{equation}\label{eq:V_PT}
\mathrm{V}_v^{\PT}(\pi):=\mathrm{F}_v-\frac{\overline{\mathrm{F}}_v}{t_1t_2t_3}+\mathrm{F}_v\overline{\mathrm{F}}_v\frac{(1-t_1)(1-t_2)(1-t_3)}{t_1t_2t_3}+
  \frac{\mathrm{F}_e}{1-t_3},\end{equation}
\begin{equation}\label{eq:Fe}
  \mathrm{F}_e=-\mathrm{Q}_e-\frac{\mathrm{Q}_e}{t_1t_2}+\mathrm{Q}_e\overline{\mathrm{Q}}_e\frac{(1-t_1)(1-t_2)}{t_1t_2}
\end{equation}

The equivariant weight of the localization is obtain by taking plethistic exponent:
\begin{equation}\label{eq:pleth}
  \mathrm{Exp}(\sum a_{ijk}t_1^it_2^jt_3^k)=\prod_{ijk}(it_1+jt_2+kt_3)^{a_{ijk}}\end{equation}
Respectively, the virtual Euler class at the torus fixed point \(\pi\) is given by \[\calE_\pi^{\PT}:=\mathrm{Exp}(-\mathrm{V}^\PT_v(\pi)).\]

\subsection{Proof of theorem~\ref{thm:mainPT}}
\label{sec:proof-induct-form}

The theorem~\ref{thm:mainPT} follows from the inductive formula that allows us to reduce the size of the partition by one. The bare vertex is defined to be the sum
\[\mathrm{Z}^{\bare}_\PT( \prod_{r=1}^m \ch[u_r]|c_\lambda)=
\sum_{|\mu|=n}\frac{c_\lambda(\mu)}{e_\mu}\sum_{\pi\in\Pi_{\mu}^\PT}\calE_\pi^{\PT}\prod_{r}\ch[u_r](\pi).
\]

The main observation of the paper is that the second sum is equal to the sum over pairs
\(k\in \ZZ_{\ge 0}\) and \(\pi'\in\Pi_{\mu'}^\PT\), \(|\mu'|=n-1\), \(\mu'\subset\mu\).
The term corresponding to a pair \(k,\pi'\) is the product \(\calE_{\pi'}^{\PT}\) and
\begin{equation}\label{eq:ExpDiff}
  \frac{[-z_n-a_1-a_2]_{k}}{[-z_n+1]_{k}}\prod_{ij\in\lambda'} F_{k-k_{ij}}^{-1}(\cnt(ij)/t_3-z_n)
  \prod_{r=1}^m 
  \sum_{ij\in\lambda} \frac{e^{(\cnt(ij)+k_{ij}t_3)u_r}}{\delta(u_r)}
\end{equation}
where \(\pi'\in \Pi_{\lambda'}^\PT\), \(\pi'=\{(i,j,l),l\ge -k_{ij},ij\in\lambda'\}\) and
\(z_n=\cnt(i_0,j_0)\), \(k=k_{i_0j_0}\) \(\lambda\setminus\lambda'=(i_0,j_0)\). 

The formula follows from the analysis of the equivariant weights in the localization formula. Let \(\pi\) be a union of \(\pi'\) and the column
\(\{(i_0,j_0,l)|l\ge -k_{i_0,j_0}\}\), respectively \(\mathrm{F}_v(\pi)\) is the generating function of the points inside \(\pi\). The product of the second
and third terms in \ref{eq:ExpDiff} is equal to \(\mathrm{Exp}(-\mathrm{V}_v(\pi)+
\mathrm{V}_v(\pi'))\). On the other hand the difference  \(\mathrm{V}_v(\pi)-
\mathrm{V}_v(\pi')\) is equal
\begin{multline}\label{eq:diffVV}
  \left(\frac{t_3^{-k}-1}{1-t_3}\right)t_1^{i_0}t_2^{j_0}+\left(\frac{t_3^k-1}{1-t_3}\right)
  \frac{t_1^{-i_0}t_2^{-j_0}}{t_1t_2}+\frac{(1-t_1)(1-t_2)}{t_1t_2}\left[ t_1^{i_0}t_2^{j_0}\left(\overline{\mathrm{F}'_e}\frac{1}{1-t_3}
      +t_3^{-k-1}\overline{\mathrm{F}'_v}\right)+\right.\\
 \left. t_1^{-i_0}t_2^{-j_0}\left(\frac{\mathrm{F}'_e}{1-t_3}-t_3^k\mathrm{F}'_v\right)\right].
\end{multline}
where \(\mathrm{F}'_e=\mathrm{F}_e(\pi')\), \(\mathrm{F}'_v=\mathrm{F}_v(\pi')\) and \(k=k_{i_0,j_0}\).

Let us also make an observation that the product \(\mathrm{Exp}(-\mathrm{V}_v({\pi})+\mathrm{V}_v({\pi'}))\) vanishes is \(\pi\) is not an element of
\(\Pi^\PT_\lambda\). Indeed,  \(\pi\in \Pi^\PT_\lambda\) if and only if we have two inequalities satisfied:
\begin{equation}\label{eq:ptt-cond}
-k_{i_0-1,j_0}\ge -k_{i_0,j_0},\quad -k_{i_0,j_0-1}\ge -k_{i_0,j_0}.\end{equation}

The third factor in the product \eqref{eq:ExpDiff} contains terms that might vanish or acquire a pole. These are the terms occur only
when \(ij=(i_0-1,j_0), ij=(i_0,j_0-1), ij=(i_0,j_0-1)\) and if  we collect these terms together then we get
\begin{equation}
  \label{eq:prob-term}
  [0]_{k_{i_0j_0}-k_{i_0-1,j}}[0]_{k_{i_0j_0}-k_{i_0,j_0-1}}/[0]_{k_{i_0j_0}-k_{i_0-1,j_0-1}}.
\end{equation}

We need to check that expression \eqref{eq:prob-term} vanishes when \(\pi\notin \Pi^\PT_\lambda\) and does not develop a pole. Let first look at the case
when the first inequality in \eqref{eq:ptt-cond} fails but the second holds. Since \(\pi'\in \Pi^\PT_{\lambda'}\) we have \(-k_{i_0-1,j_0-1}\ge -k_{i_0,j_0-1}\) and hence
\(k_{i_0j_0}-k_{i_0-1,j_0-1}\le 0\). Thus the first term of \eqref{eq:prob-term} vanishes but the second and the third do not.

The case when the second inequality in \eqref{eq:ptt-cond} fails but the first one holds is analogous. The last case we need to analyze is when
both inequalities in \eqref{eq:ptt-cond} fail. In this case the first two terms of \eqref{eq:prob-term} vanish. Thus the double zero of the numerator
would compensate possible order one pole of the denominator.

The variable \(z_n\) in the last formula is equal to the equivariant weight of the line bundle \(\calL=\calO^{[n]}/\calO^{[n-1]}\) in the proof of \eqref{eq:intSn} and we can use the formula from proposition~\ref{prop:Hilb-int} to finish a proof of the theorem~\ref{thm:mainPT}. 

\subsection{Proof of theorem~\ref{thm:PT-int}}
\label{sec:proof-theorem}
To compute the PT invariants of \(X\) we can use the formula~\eqref{eq:locP1}. Combining this formula with the
main formula of theorem~\ref{thm:mainPT} we conclude that the statement of theorem holds for
\[\mathrm{F}^d(u,v,q,t)=\sum_{\vec{m},\vec{l}}\Omega(z')\Omega(z'')\mathrm{F}^d_{\vec{m},\vec{l}}\,\,u^{\vec{m}}v^{\vec{l}}\]
\[
\mathrm{F}^d(u,v,q,t)=
\sum_{\vec{k'},\vec{k''}}q^{|\vec{k'}|+|\vec{k''}|} \Pi'(\vec{k'},z')\mathrm{E}^d(z',z'')\Pi''(\vec{k''},z'')
\prod_{r=1}^{r'}\sum_{i=1}^n \frac{e^{t_3u_r(z'_i+k'_i)}}{\delta(u_r)}\prod_{r=1}^{r''}\sum_{i=1}^n\frac{e^{t_3v_r(z''_i+k''_i)}}{\delta(v_r)}
\]
where \(z'=(z_1,\dots,z_n)\), \(z''=(z_{n+1},\dots,z_{2n})\); \(\Pi'=\Pi\), \(\Pi''\) is \(\Pi\)
after substituting \(t_i\) with
\(s_1=t_1+d_1t_3,s_2=t_2+d_2t_3,s_3=-t_3\) and

\[\mathrm{E}^d(z',z'')=\sum_{|\lambda|=n}J_\lambda(z') \mathrm{E}^d(\lambda,\lambda) J_\lambda(z''), \]

here  \(J_\lambda(z)\) is the interpolation polynomial \cite{Okounkov98} that expresses the the class of the torus fixed point
\(\lambda\) in terms of the Chern roots \(z_i\) of the tautological bundle \(\calO^{[n]}\).

\begin{remark}
  The edge term \(\mathrm{E}^d(\lambda,\lambda)\) becomes the Euler class of the tangent bundle if \(d=(0,0)\).
  Thus the previous formula simplifies and one can write a formula for the descendent PT invariants of
  \(\CC^2\ti \PP^1\) as an \(n\)-iterated residue (instead of \(2n\)-iterated residue for the general \(d\)), see
  \cite{Oblomkov19}.
\end{remark}

\subsection{Specialization of the equivariant parameters}
\label{sec:spec-equiv-param}
In this section we discuss the properties of the bare vertex in particular we provide a different proof of the 'cancelation of poles' lemma from \cite{PandharipandePixton12} and outline the argument from the last paper. It is shown in \cite{PandharipandePixton12} that the capped
one-legged vertex is a rational function of \(q\).

\begin{theorem} \cite{PandharipandePixton12}
  The any partition \(\eta\) the capped fully-equivariant vertex \(\mathrm{Z}_\PT(\prod_k\ch_{i_k}(0)|\omega)\) is a \(q\)-expansion of some rational
  function.
\end{theorem}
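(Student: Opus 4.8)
The plan is to follow the reduction of \cite{PandharipandePixton12} from the capped vertex to the bare vertex, but to replace their technical Proposition~4 by an argument read off directly from the integral formula of Theorem~\ref{thm:mainPT}. First I would recall that the capped descendent vertex is assembled from bare vertices through the gluing formula \eqref{eq:locP1}: it is a finite $H^*_T(\pt)$-bilinear pairing of bare-vertex factors $\mathrm{Z}^{\bare}_{\PT}(\cdots|\gamma_s)$ against the explicit edge matrix $\mathrm{E}^d(\gamma_s,\gamma_r)$, which is itself independent of $q$. Consequently all of the $q$-dependence of the capped vertex is carried by the bare factors, so it suffices to understand the bare vertex. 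Since the bare vertex is not rational in $q$ for generic equivariant parameters, the strategy is to establish $q$-rationality only along the hyperplanes $t_1+t_2=c\,t_3$, $c\in\ZZ_{>0}$, and then to interpolate.

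For the interpolation step I would use Kronecker's criterion: a series $\sum_N a_N(t)\,q^N$ with $a_N\in\CC(t)$ is the expansion of a rational function of complexity at most $D$ precisely when the Hankel determinants $\det\bigl(a_{i+j}\bigr)_{0\le i,j\le m}$ vanish for all $m\ge D$. These determinants are rational in $t$. The order of the cap and the size $n=|\eta|$ bound the relevant complexity $D$ uniformly in $c$, so $q$-rationality on each hyperplane $t_1+t_2=c\,t_3$ forces the Hankel determinants to vanish there; as this happens on a Zariski-dense set and the determinants are rational in $t$, they vanish identically, giving $q$-rationality over $\CC(t)$ and hence the theorem.

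It remains to prove the specialized statement, the simplified form of Proposition~4 of \cite{PandharipandePixton12}. Setting $a_1+a_2=c$ in Theorem~\ref{thm:mainPT}, the factor $[-z_i-a_1-a_2]_{k_i}/[-z_i+1]_{k_i}$ becomes $[-z_i-c]_{k_i}/[-z_i+1]_{k_i}$, a ratio of Pochhammer symbols whose parameters differ by the positive integer $c+1$ and which therefore collapses to a rational function of $k_i$ of fixed degree; likewise the pair $[z+a_1+a_2]_m/[z-a_1-a_2]_m$ inside $F_m$ collapses to $[z+c]_m/[z-c]_m$, rational in $m=k_j-k_i$. I would then show that, after these collapses, the iterated residue $\mathrm{Z}^{\bare}_{\PT,\vec{k}}$ is a quasi-polynomial in $\vec{k}$, so that $\sum_{\vec{k}\ge 0}\mathrm{Z}^{\bare}_{\PT,\vec{k}}\,q^{|\vec{k}|}$ is manifestly a rational function of $q$, being obtained by applying polynomials in $q\,\tfrac{d}{dq}$ to geometric series.

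The hard part is the cancellation that makes $\mathrm{Z}^{\bare}_{\PT,\vec{k}}$ quasi-polynomial. For generic $a_1,a_2$ the remaining Pochhammer pairs $[z-a_i]_m/[z+a_i]_m$ grow like $m^{-2a_i}$ and feed Lerch-type, non-rational contributions into the $\vec{k}$-sum; the specialization $a_1+a_2=c$ is precisely what forces these to reorganize into rational combinations. I expect this to be the main obstacle, and I would attack it by a pole-by-pole analysis along the nested contour $R_1\gg\cdots\gg R_n$: the finite poles of $\omega(z_i-z_j)$ at $z_i-z_j=t_1,t_2,t_1+t_2$ must be matched by zeros of the $F$-factors, exactly as in the vanishing analysis around \eqref{eq:prob-term}, so that each would-be transcendental residue is accompanied by a compensating zero and only the quasi-polynomial part of the iterated residue survives. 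Making this cancellation precise, and checking that the resulting quasi-polynomial degree is bounded uniformly in $c$ so that the interpolation above applies, is the technical core of the argument.
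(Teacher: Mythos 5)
Your first step contains the decisive error: the capped vertex is \emph{not} assembled from bare vertices through the gluing formula \eqref{eq:locP1}. That formula computes the absolute invariants of the local curve $\calO(d_1)\oplus\calO(d_2)\to\PP^1$ from two bare vertices and the $q$-independent edge matrix $\mathrm{E}^d$. The capped vertex is instead obtained by capping the bare vertex with the rubber matrix,
\[\mathrm{Z}_{\PT}(\textstyle\prod_k\ch_{i_k}(0)|\eta)=\sum_\lambda \mathrm{Z}^{\bare}_{\PT}(\textstyle\prod_k\ch_{i_k}|c_\lambda)\,\mathrm{W}_{\lambda,\mu}\,\mathrm{S}^\mu_\eta,\qquad \mathrm{S}^\mu_\eta=\sum_{d\ge n}q^d\left\langle \mu\left\vert\frac{1}{s_3-\psi_0}\right\vert C_\eta\right\rangle_{d,n},\]
and $\mathrm{S}^\mu_\eta$ is a genuinely $q$-dependent series (the rubber solution of the quantum differential equation). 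So your claim that all of the $q$-dependence of the capped vertex sits in the bare factors is false, and the reduction collapses: one must separately establish $q$-rationality of $\mathrm{S}$ under the specializations $t_1+t_2=ct_3$, which in \cite{PandharipandePixton12} is a substantial independent ingredient and is exactly why the paper, in section~\ref{sec:spec-equiv-param}, only replaces the bare-vertex lemma (Proposition~4 of \cite{PandharipandePixton12}) and defers the rest of the architecture to that paper. Your Kronecker--Hankel interpolation also takes for granted a complexity bound $D$ uniform in $c$; nothing in Theorem~\ref{thm:mainPT} supplies it, and in \cite{PandharipandePixton12} this is precisely where the control of pole locations and degrees enters. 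Moreover the Hankel determinants only vanish along the hyperplane $t_1+t_2=ct_3$ if the coefficients $a_N(t)$ are regular there, so you are implicitly invoking the well-definedness half of the cancellation-of-poles lemma before proving it.

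On the specialized statement your plan does coincide with the paper's argument, but you misplace the difficulty. By Remark~\ref{rem:ord} the integral in Theorem~\ref{thm:mainPT} is an iterated residue at $z_i=\infty$: one only extracts Laurent coefficients of $\Omega(z)\Pi(\vec{k},z)$ in $z_i^{-1}$, and after the collapse at $a_1+a_2=c$ each specialized factor is a ratio of polynomials in $z$ of equal degree with coefficients polynomial in $k$, so every Laurent coefficient is a universal polynomial $Q_{\vec{m}}(\vec{k})$. No residues at the finite poles of $\omega(z_i-z_j)$ are ever taken, hence no pole-by-pole matching of zeros against poles is required and the Lerch-type contributions you anticipate never arise; the quasi-polynomiality of $\rzz^{\bare}_{\PT,\vec{k}}$ in $\vec{k}$ is immediate, and $\sum_{\vec{k}\ge 0}P_{\vec{i},\omega}(\vec{k})\,q^{|\vec{k}|}$ is rational in $q$. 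Note finally that this route, like the paper's, yields only rationality of the specialized bare vertex, which is strictly weaker than the Laurent-polynomiality asserted in Proposition~4 of \cite{PandharipandePixton12}; your interpolation scheme would have to be checked to survive on this weaker input as well.
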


The capped vertex related to the bare vertex by the formula:
\[\mathrm{Z}_{\PT}(\prod_k\ch_{i_k}(0)|\eta)=\sum_\lambda \mathrm{Z}^{\bare}_{\PT}(\prod_k\ch_{i_k}|c_\lambda) \mathrm{W}_{\lambda,\mu}\mathrm{S}^\mu_\eta.\]
where the matrix \(\mathrm{S}^\mu_\eta\) is the rubber solution of the quantum differential equation
\[\mathrm{S}^\mu_\eta=\sum_{d\ge n}q^d\left\langle \mu\left\vert\frac{1}{s_3-\psi_0}\right\vert C_\eta\right\rangle_{d,n},\]
here \(C_\eta\)  are the Nakajima cycles in \(S^{[n]}\).
The matrix \(\mathrm{W}_{\lambda,\mu}\) has integrals \(\int_{C_\lambda}c_\mu\) as
entries.

The argument of \cite{PandharipandePixton12} based on the specialization of the equivariant parameters: \(t_1+t_2=ct_3\), \(c\in\ZZ_+\).
When the parameters are specialized both \(\mathrm{Z}^{\bare}_\PT(\dots)\) and \(\mathrm{S}\) are rational functions of \(q\). In particular,
the following 'cancelation' of poles lemma was proven in \cite{PandharipandePixton12} by a very ingenious method:

\begin{lemma}
  The specialization \(t_1+t_2=ct_3\), \(c\in \ZZ_{\ge 0}\) is well-defined and \( \mathrm{Z}^{\bare}_{\PT}(\prod_k\ch_{i_k}|\omega)\)
  is a Laurent polynomial of  \(q\).
\end{lemma}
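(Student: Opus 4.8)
The plan is to read the statement off the residue formula of Theorem~\ref{thm:mainPT}, which writes the bare vertex as
\[
\mathrm{Z}^{\bare}_{\PT}(\textstyle\prod_r \ch[u_r]|c_\lambda)=\sum_{\vec k\in\ZZ_+^n}\mathrm{Z}^{\bare}_{\PT,\vec k}(\textstyle\prod_r \ch[u_r]|c_\lambda)\,q^{|\vec k|},
\]
each coefficient being an iterated residue at \(z_i=\infty\) of \(\Omega(z)\,e_\lambda(z)\,\Pi(\vec k,z)\) paired against the descendent factor. Since the whole \(q\)-dependence sits in \(q^{|\vec k|}\), the lemma is equivalent to two assertions: that each \(\mathrm{Z}^{\bare}_{\PT,\vec k}\) has a finite limit on the locus \(a_1+a_2=c\) (well-definedness), and that \(\mathrm{Z}^{\bare}_{\PT,\vec k}\) vanishes for all but finitely many \(\vec k\) after specialization (Laurent-polynomiality). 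I would treat these in turn.

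First I would record how the specialization acts on the building blocks. Setting \(a_1+a_2=c\in\ZZ_{\ge 0}\) (i.e.\ \(t_1+t_2=ct_3\)) turns the transcendental factors \([z\pm(a_1+a_2)]_k\) into integer-shift Pochhammers \([z\pm c]_k\). Consequently, in \(F_k\) the ratio \([z+c]_k/[z-c]_k\) collapses, for large \(k\), to the bounded-window rational function \(\prod_{m=k-c}^{k+c-1}(z+m)/\prod_{m=-c}^{c-1}(z+m)\), and the prefactor \([-z_i-c]_{k_i}/[-z_i+1]_{k_i}\) collapses, for large \(k_i\), to \(\prod_{m=0}^{c}(-z_i-m)/\prod_{m=k_i-c}^{k_i}(-z_i+m)\). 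The \emph{cancellation of poles} is then exactly the statement that the poles along \(a_1+a_2=c\) produced by the denominator \([z-c]_k\) of \(F^{-1}\) are matched by the numerator zeros \([z+c]_k\); this is the same zero/pole bookkeeping already carried out in the proof of Theorem~\ref{thm:mainPT}, where the collision terms \eqref{eq:prob-term} were shown to have numerator double zeros compensating the denominator pole precisely on the locus \eqref{eq:ptt-cond}. Transporting that analysis to the transverse variable \(t_1+t_2-ct_3\) yields a finite limit for each \(\mathrm{Z}^{\bare}_{\PT,\vec k}\), hence well-definedness.

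For the truncation I would run a degree count at \(z_i=\infty\) on the specialized integrand. At a fixed order in the descendent variables \(\vec u\), the insertions \(\prod_r\sum_i e^{t_3u_r(z_i+k_i)}/\delta(u_r)\) contribute a polynomial in the \(z_i\) whose degree is bounded by the descendent orders alone, a bound independent of \(\vec k\); meanwhile \(\Omega\), \(e_\lambda\), the prefactor and the \(F^{-1}\)-factors all tend to fixed finite limits as the \(z_i\to\infty\). Expressing \(\Res_{z_i=\infty}\) as a finite collection of Laurent coefficients, one sees that once every \(k_i\) and every difference \(|k_j-k_i|\) exceeds the stabilization thresholds above, the integer-Pochhammer windows force explicit zeros at the residue-selected arguments, so the iterated residue vanishes identically for \(|\vec k|\) large. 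Only the finitely many \(\vec k\) lying below these thresholds contribute, so the \(q\)-series terminates after finitely many terms and \(\mathrm{Z}^{\bare}_{\PT}\) is a Laurent polynomial in \(q\).

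The main obstacle is precisely this last vanishing, which is the heart of Proposition~4 of \cite{PandharipandePixton12}. The delicate point is that the descendent factor carries the growing term \(e^{t_3u_rk_i}\), so a naive estimate suggests growth rather than decay; one must show that the zeros produced by the specialized windows in \(F^{-1}\) and in the prefactor dominate. Equally delicate is the behaviour on the walls \(k_i=k_j\) and for small \(k_i\), where the stabilized forms fail and the collision analysis of \eqref{eq:ExpDiff}–\eqref{eq:prob-term} must be invoked directly; controlling these boundary strata uniformly, so that their total contribution is supported in a bounded range of \(|\vec k|\), is where I expect the argument to require the most care, and where I would lean hardest on the explicit combinatorics of the residue formula together with Remark~\ref{rem:ord} on the order of the iterated residues.
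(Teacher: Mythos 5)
Your first half runs parallel to what the paper actually does, but note at the outset that the paper does \emph{not} prove the lemma as stated: it says explicitly that it shows ``the weaker version of the lemma,'' namely that the specialized series is a rational function of \(q\), and defers the genuine Laurent polynomiality to the ``very ingenious'' geometric argument of \cite{PandharipandePixton12}. Concretely, the paper records the telescoped specialization of \(F_k(z)\) at a generic point of the line \(a_1+a_2=c\) (this already settles well-definedness; there is no need to transport the collision analysis of \eqref{eq:prob-term}, which concerns vanishing and poles at special integral configurations of \(\vec{k}\) at generic parameters, to the specialization locus), then observes that the \(z^{-1}\)-Laurent coefficients of \(\Omega(z)\Pi(\vec{k},z)\) are universal polynomials \(Q_{\vec{m}}(\vec{k})\), that the descendent factor at fixed order in \(\vec{u}\) is polynomial in \(z\) with coefficients polynomial in \(\vec{k}\), and concludes that \(\rzz^{\bare}_{\PT,\vec{k}}(\prod_k\ch_{i_k}|\omega)=P_{\vec{i},\omega}(\vec{k})\) for a single universal polynomial \(P_{\vec{i},\omega}\); summing \(P_{\vec{i},\omega}(\vec{k})\,q^{|\vec{k}|}\) over \(\vec{k}\in\ZZ_{\ge 0}^n\) gives a rational function of \(q\). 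Up to this point your outline and the paper agree in substance.

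The genuine gap is your second step, and the proposed mechanism is not merely unproved but contradicts the structure you have just set up. You claim that after specialization the iterated residue \(\rzz^{\bare}_{\PT,\vec{k}}\) ``vanishes identically for \(|\vec{k}|\) large.'' But the residue at \(z_i=\infty\) extracts Laurent coefficients, and by the computation above these are the values of the polynomial \(P_{\vec{i},\omega}\) at the lattice point \(\vec{k}\): expanding a specialized window such as \((z+k-c+j)^{-1}\) in powers of \(z^{-1}\) produces coefficients polynomial in \(k\), so the integer zeros of the specialized Pochhammer windows sit at special values of the arguments and are simply invisible to the residue at infinity. A polynomial on \(\ZZ_{\ge 0}^n\) vanishing at all lattice points of large norm vanishes identically, so your termwise vanishing would force \(P_{\vec{i},\omega}\equiv 0\) and hence the whole specialized vertex to be zero, which is false. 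For the same reason your opening reduction (``the lemma is equivalent to \(\rzz^{\bare}_{\PT,\vec{k}}\) vanishing for all but finitely many \(\vec{k}\)'') is too strong: Laurent polynomiality only requires cancellation among all \(\vec{k}\) with fixed \(|\vec{k}|\). And even that cancellation cannot be seen naively from this formula, since the sum of a fixed polynomial over the dilated simplex \(|\vec{k}|=N\) is quasi-polynomial in \(N\) and vanishes for all large \(N\) only if it vanishes for every \(N\); detecting the ``cancellation of poles'' therefore requires input beyond the universal-polynomial structure (this is exactly why the paper settles for rationality and cites the geometric vanishing of \cite{PandharipandePixton12} for the full statement). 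If you want the lemma as stated, you must either reproduce the argument of \cite{PandharipandePixton12} or find new structure in the residue formula, e.g.\ chamber dependence of the iterated residues in the spirit of Remark~\ref{rem:ord}, neither of which your sketch supplies.
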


We show below the weaker version of the lemma by demonstrating that the specialization
is a rational function of \(q\).
Indeed, the specialization in our terms is the specialization to a generic point
of a line \(a_1+a_1=c\) and we need to study  our residue formula
for the bare vertex.

The function \(F_k(z)\) it specializes to the rational function of \(k\) 
\[
  \prod_{j=0}^{c-1}\left(\frac{z-a_1+j}{z-a_1+k+j}\right)
  \left(\frac{z+a_1-c+j}{z+a_1+k+j}\right)\prod_{j=0}^{2c-1}\left(\frac{z+k-c+j}{z-c+j}\right).\]

To iterated residue in the formula for \(\rzz^\bare_{\PT,\vec{k}}(\prod_k \ch_{i_k}|\omega)\) we need to expand \(\Omega(z)\Pi(\vec{k},z)\) in Laurent
power series of \(z_i^{-1}\). From the formulas above we see that
there are universal polynomials \(Q_{\vec{m}}(\vec{k})\) that give us the expansion
coefficients
\[ [z^{\vec{m}}]\Omega(z)\Pi(\vec{k},z)=Q_{\vec{m}}(\vec{k}).\]
On the other hand  the other factor in the residue formula is
\[ [u^{i_1}_1\dots u^{i_m}_m]\prod_{r=1}^m\sum_{i=1}^n \frac{e^{t_3u_r(z_i+k_i)}}{\delta(u_r)}\]
and it is  polynomial of \(z\) with coefficients that are polynomials of \(\vec{k}\). Since \(\omega\) is also a polynomial
of \(z_i\) we conclude that there is a unversal polynomial
\(P_{\vec{i},\omega}\) such that
\[\rzz^\bare_{\PT,\vec{k}}(\prod_k \ch_{i_k}|\omega)=P_{\vec{i},\omega}(\vec{k}),\]
for all.
Since \(\sum_{\vec{k}\ge 0}P_{\vec{i},\omega}(\vec{k}) q^{|\vec{k}|}\) is a rational
function of \(q\) the statement follows.

\section{Integral formula for the DT invariants.}
\label{sec:integral-formula-dt}

\subsection{DT descendents: definition}
\label{sec:dt-desc-defin}

As we explained in the introduction the DT invariants
\cite{MaulikNekrasovOkounkovPandharipande06a} are the integrals over the virtual class of the Hilbert scheme \(\Hilb(X,\beta)\).
Below we give  precise the definitions of the corresponding cohomology classes and fix the notation for the rest of the paper.

Given a sheaf \(\bss\) over the product \(\Hilb(X,\beta)_n\times X\)  and \(\alpha\in H^*(X)\)
we define  the descendent classes in terms of characteristics classes of \(\bss\):
\[\ch_k(\bss;\alpha):=\int_X\ch_k(\bss)\cdot \alpha,\quad \alpha\in H^*(X).\]

In particular, over the product \(\Hilb(X,\beta)\times X\) there is the universal ideal sheaf \(\bii\) and the quotient vector bundle \(\bqq=\mathbb{O}/\bii\). We abbreviate the notation \(\ch_k(\bqq;\alpha)\) to  \(\ch_k(\alpha)\).


\subsection{Localization formula}
\label{sec:localization-formula}
In this section we are concerned with the DT invariant of the local \(\PP^1\) relative to the infinite point. That is
from here till the end of this section \(X\) is assumed to be  \(\PP^1\ti \CC^2\) and the relative divisor  \(D=\infty\ti \CC^2\) if
we do not explicitly say otherwise.
Here we just remind   that the bare vertex
\[\mathrm{Z}^{\bare}_{\DT}(\prod_i\ch_i|\gamma)^\rtt, \quad \gamma\in H^*_T(\Hilb_n(\CC^2))\] is an element of the ring of formal power-series
\(\CC(t_1,t_2,t_3)[q^{-1}][[q]]\) such that we can evaluate the absolute PT invariants of the local \(X=\calO(d_1)\oplus\calO(d_2)\to\mathbb{P}^1\) by
\[\langle \prod_i\ch_{k_i}([0])\prod_{j}\ch_{l_j}([\infty])\rangle_{n[\mathbb{P}^1]}^{\DT}=\sum_{s}\mathrm{Z}^{\bare}_{\DT}(\prod_i\ch_{k_i}|\gamma_s)
  \mathrm{E}^d(\gamma_s,\gamma_r)
\left.  \mathrm{Z}^{\bare}_{\DT}(\prod_j\ch_{l_j}|\gamma_s)\right|_{t_i=s_i}\]
where \(\gamma_i\) is  basis of \(H^*_{T}(\Hilb_n(\CC^2))\) and \(s_3=-t_3,s_2=t_2+d_2t_3,s_1=t_1+d_1t_3\).

We follow the original paper \cite{MaulikNekrasovOkounkovPandharipande06a} and use the same notations
as in \cite{MaulikNekrasovOkounkovPandharipande06a}. The torus fixed locus  is labeled by the honest 3D partitions with
one infinite leg in direction \(t_3\).
The set of 3D partitions with the shape of the infinite leg \(\lambda\) we denote \(\Pi^{\DT}_\lambda\). Respectively, we have the generating function
for the monomials inside \(\pi\in\Pi^{\DT}_\lambda\):
\[\rqq_v(\pi)=\sum_{ijk\in \pi}t_1^it_2^jt_3^k.\]
If \(\lambda\ne \emptyset\) sum \(\rqq_v(\pi)\) is infinite.
The generating function for the leg of shape \(\lambda\) is the polynomial:
\[\rqq_e(\pi)=\sum_{ij\in\lambda}t_1^it_2^j.\]

In these terms the localization weight of DT virtual cycle  at torus fixed point \(\pi\) is defined in terms of \(\mathrm{V}_v^\DT\):
\begin{equation}\label{eq:V_DT}
\mathrm{V}_v^\DT(\pi)=\rqq_v-\frac{\overline{\rqq}_v}{t_1t_2}+\rqq_v\overline{\rqq}_v\frac{(1-t_1)(1-t_2)(1-t_3)}{t_1t_2t_3}+\frac{\mathrm{F}_e}{1-t_3},\end{equation}
\[\mathrm{F}_e=-\rqq_e-\overline{\rqq}_e\frac{1}{t_1t_2}+\rqq_e\overline{\rqq}_e\frac{(1-t_1)(1-t_2)}{t_1t_2}\]

The  weights of the generating function for the character of \(\ch_k([0])\) at the torus-fixed point \(\pi\) is given by
\[\sum \ch_k(\pi)z^k=(1-e^{zt_1})(1-e^{zt_2})(1-e^{zt_3})\sum_{(ijk)\in\pi} e^{(it_1+jt_2+kt_3)z}.\]

As in PT case we define the bare vertex with descendents by the sum over the torus fixed points:
\[\mathrm{Z}^{\bare}_{\DT}(\prod_i\ch_{s_i}|\lambda)=\sum_{\pi\in\Pi^{DT}_\lambda}\frac{q^{|\hat{\pi}|}}{e_{\pi}}\calE_\pi^{\DT}\prod_i \ch_{s_i}(\pi), \]
where the localization factor for the virtual cycle 
is given by \[\calE_\pi^{\DT}:=\mathrm{Exp}(-\mathrm{V}^\DT_v(\pi)).\]

\subsection{DT/PT formula}
\label{sec:dtpt-formula}

The  statement of the theorem~\ref{thm:DTPTdeg0} follows from a careful comparison of the measures \(\calE^{\DT}_{\pi}\) and
\(\calE^{\DT}_{\pi}\). This  comparison is done in the next proposition.

Let us fix a partition \(\mu\), \(|\mu|\), we also fix notation \(\Pi^{\DT}_{\emptyset,\mu}\) for the
set of 3D Young diagrams \(\pi\) such that \(\pi^{(1)}=\mu\).
The  elements of the sets \(\Pi^{\PT}_\mu\) as well as
the elements of the set \(\Pi^{\DT}_{\emptyset,\mu}\) are labeled by the \(n\)-tuple of integers \(k_{ij}\), \(ij\in \mu\).
In particular, \(\pi(\vec{k})\in \Pi^{\PT}_\mu\) corresponds to the inverted 3D partition consisting of
\((i,j,m)\) satisfying inequality \(m\le k_{ij}\). Respectively, \(\pi(\vec{k})\in \Pi^{\DT}_{\emptyset,\mu}\) is a 3D partition
consisting of \((i,j,m)\) satisfying inequalities \(1\le m\le k_{ij}\). 

The  measures the measures \(\calE^{\DT}_{\pi(\vec{k})}\) and
\(\calE^{\PT}_{\pi(\vec{k})}\) depend analytically on paramaters \(\vec{k}\). Thus we can discuss the relation between these measures
for generic values of \(\vec{k}\).
\begin{prop} For generic values of \(\vec{k}\) we have:
  \begin{multline*}
  \frac{\calE^{\DT}_{\pi(\vec{k})}}{\calE^{\PT}_{\pi(\vec{k})}}=\prod_{(ij)\in\mu}\frac{[\mathbf{c}(ij)/t_3+a_1+a_2+1]_{k_{ij}}^2}{[\mathbf{c}(ij)/t_3]_{k_{ij}}^2}\times
  \\ \prod_{(ij),(lm)\in\mu}\frac{[\mathbf{c}(ij)/t_3-\mathbf{c}(lm)/t_3+1]_{k_{ij}}[\mathbf{c}(ij)/t_3-\mathbf{c}(lm)/t_3+1+a_1+a_2]_{k_{ij}}
    }{[\mathbf{c}(ij)/t_3-\mathbf{c}(lm)/t_3]_{k_{ij}}
    [\mathbf{c}(ij)/t_3-\mathbf{c}(lm)/t_3-a_1-a_2]_{k_{ij}}}\times\\
  \frac{[\mathbf{c}(ij)/t_3-\mathbf{c}(lm)/t_3-a_1]_{k_{ij}}[\mathbf{c}(ij)/t_3-\mathbf{c}(lm)/t_3-a_2]_{k_{ij}}}{[\mathbf{c}(ij)/t_3-\mathbf{c}(lm)/t_3+1+a_1]_{k_{ij}}[\mathbf{c}(ij)/t_3-\mathbf{c}(lm)/t_3+1+a_2]_{k_{ij}}}\end{multline*}
\end{prop}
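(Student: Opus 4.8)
The plan is to reduce everything to a single character computation. Since $\calE^\bullet_\pi=\mathrm{Exp}(-\mathrm{V}^\bullet_v(\pi))$ and the plethystic exponential \eqref{eq:pleth} is a homomorphism from the additive to the multiplicative group, one has
\[
\frac{\calE^{\DT}_{\pi(\vec k)}}{\calE^{\PT}_{\pi(\vec k)}}=\mathrm{Exp}\big(\mathrm{V}^{\PT}_v(\pi(\vec k))-\mathrm{V}^{\DT}_v(\pi(\vec k))\big),
\]
so it suffices to evaluate the difference of the vertex characters \eqref{eq:V_PT} and \eqref{eq:V_DT} and to recognize it as a finite sum of monomials $\sum a_{pqr}t_1^pt_2^qt_3^r$ whose $t_3$-exponents run over ranges of length $k_{ij}$. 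Each such range exponentiates to a Pochhammer symbol $[\,\cdot\,]_{k_{ij}}$ through $\mathrm{Exp}(\sum a_{pqr}t_1^pt_2^qt_3^r)=\prod(pt_1+qt_2+rt_3)^{a_{pqr}}$, the linear form being divided by $t_3$ exactly as in the conventions $a_i=t_i/t_3$ of Theorem~\ref{thm:mainPT}.

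First I would record the box characters of the two configurations. In the conventions fixed before the statement the PT configuration carries the columns $\{m\le k_{ij}\}$ and the DT configuration the columns $\{1\le m\le k_{ij}\}$, so they differ only by the semi-infinite descending tail $\{m\le 0\}$, giving
\[
\mathrm{F}_v=\rqq_v+C,\qquad C:=\Big(\sum_{ij\in\mu}t_1^it_2^j\Big)\sum_{m\le 0}t_3^m,\qquad \overline C=\Big(\sum_{ij\in\mu}t_1^{-i}t_2^{-j}\Big)\frac{1}{1-t_3}.
\]
The crucial point is that $C$ is independent of $\vec k$ and carries the entire cylinder tail, while all the dependence on $\vec k$ sits in $\rqq_v=\sum_{ij}t_1^it_2^j(t_3+\dots+t_3^{k_{ij}})$ through the powers $t_3^{k_{ij}}$.

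Substituting $\mathrm{F}_v=\rqq_v+C$ into \eqref{eq:V_PT} and subtracting \eqref{eq:V_DT}, three groups of terms appear. The linear terms give $C$, the discrepancy $\overline{\rqq}_v(t_3-1)/(t_1t_2t_3)-\overline C/(t_1t_2t_3)$ produced by the mismatch of denominators $t_1t_2t_3$ versus $t_1t_2$ in the two definitions, and the edge term $\mathrm{F}^{\PT}_e/(1-t_3)$ (here $\mathrm{F}^{\DT}_e=0$, as the DT leg is empty). The quadratic term contributes $\big(\rqq_v\overline C+C\overline{\rqq}_v+C\overline C\big)\tfrac{(1-t_1)(1-t_2)(1-t_3)}{t_1t_2t_3}$. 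I would then split according to $\vec k$-dependence: the pieces built only from $C,\overline C$ and $\mathrm{F}^{\PT}_e$ (including the constant $C\overline C$ and the quadratic part of the edge) are $\vec k$-independent and must cancel, which is exactly the assertion that $\mathrm{V}^{\PT}_v-\mathrm{V}^{\DT}_v$ is a genuine Laurent polynomial; the single-index $\vec k$-dependent contributions assemble, after exponentiation, into the squared diagonal factor indexed by $(ij)\in\mu$ with base $\cnt(ij)/t_3$; and the double-index pieces $\rqq_v\overline C+C\overline{\rqq}_v$, which carry $t_3^{k_{ij}}$ against the monomials $t_1^{i-l}t_2^{j-m}$, assemble into the paired product over $(ij),(lm)\in\mu$ with base $\cnt(ij)/t_3-\cnt(lm)/t_3$ shifted by the integer and $a_i$ combinations recorded in the statement. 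The mechanism converting a block $\sum_{m=1}^{k_{ij}}t_3^m$ into a Pochhammer of length $k_{ij}$ is identical to the passage from \eqref{eq:diffVV} to \eqref{eq:ExpDiff} in the proof of Theorem~\ref{thm:mainPT}.

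I expect the main obstacle to be the bookkeeping of the $\vec k$-independent cancellation together with the recombination of ascending and descending $t_3$-ranges. One must verify that $C,\overline C$, the constant $C\overline C$ and the edge term $\mathrm{F}^{\PT}_e/(1-t_3)$ — all individually infinite $t_3$-series — cancel exactly, leaving no spurious $\vec k$-independent prefactor, consistent with the fact that every factor in the statement equals $1$ when the corresponding $k_{ij}=0$; and that the descending ranges coming from $\overline{\rqq}_v$ recombine, via the reflection identity for $[\,\cdot\,]_{k}$, with the ascending ranges from $\rqq_v$ into single Pochhammer blocks carrying precisely the shifts by $1,a_1,a_2,a_1+a_2$ displayed. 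As in the analysis around \eqref{eq:prob-term}, the zeros and poles at the top and bottom of each column must be tracked to exclude surviving factors, and it is here that the genericity of $\vec k$ is used, treating the $k_{ij}$ as analytic parameters.
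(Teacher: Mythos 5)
Your skeleton is the same as the paper's: write \(\calE^{\DT}_{\pi(\vec{k})}/\calE^{\PT}_{\pi(\vec{k})}=\mathrm{Exp}\big(\mathrm{V}^{\PT}_v(\pi(\vec{k}))-\mathrm{V}^{\DT}_v(\pi(\vec{k}))\big)\), compute the character difference in closed form, and exponentiate \(t_3\)-ranges of length \(k_{ij}\) into Pochhammer symbols. The paper, however, does not use your tail decomposition: it expresses both characters through the finite data \(\rqq_e\) and \(\rqq_v=\frac{\rqq_e}{1-t_3}-\mathrm{F}'_v\), with \(\mathrm{F}'_v=\sum_{(ij)\in\mu}t_1^it_2^jt_3^{k_{ij}}/(1-t_3)\), and records the outcome
\[\mathrm{V}^{\PT}_v-\mathrm{V}^{\DT}_v=2\Big(-\rqq_v+\frac{\bar{\rqq}_v}{t_1t_2t_3}\Big)+(\rqq_e\bar{\rqq}_v-t_3\bar{\rqq}_e\rqq_v)\frac{(1-t_1)(1-t_2)}{t_1t_2t_3},\]
whose plethystic exponential is exactly the stated product: the \emph{doubled} linear part yields the squared diagonal factor, and the four monomial shifts of \((1-t_1)(1-t_2)/t_1t_2\) applied to the two cross terms yield the eight Pochhammers of the double product.

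The genuine gap is your identity \(\mathrm{F}_v=\rqq_v+C\). It treats the PT description \(\{m\le k_{ij}\}\) and the DT description \(\{1\le m\le k_{ij}\}\) as subsets of one coordinate system, but the former is stated in \emph{inverted} coordinates: the module \(M_\pi\) contains \(\calO_\lambda\) (degrees \(m\ge 0\)) and its extra sections sit in degrees \(-k_{ij}\le m\le -1\), so the character entering \eqref{eq:V_PT} has its finite part in degrees \(t_3^{-k_{ij}}\), while \(\rqq_v\) lives in \(t_3^{+m}\); hence \(\mathrm{F}_v-\rqq_v\) is not the \(\vec{k}\)-independent tail \(C\) but carries both \(t_3^{k_{ij}}\) and \(t_3^{-k_{ij}}\). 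Your two key claims are then mutually inconsistent under any fixed convention: with the correct module character one checks \(\mathrm{V}^{\PT}_v(\pi(\vec{0}))=0\) (the tail cancels internally against \(\mathrm{F}_e/(1-t_3)\), with the bar restored on the second term of \eqref{eq:Fe}), so the ``must cancel'' step holds, but then \(\mathrm{F}_v=\rqq_v+C\) is false; if instead you force the inverted literal character so that \(\mathrm{F}_v=\rqq_v+C\) holds, then the \(\vec{k}\)-independent block \(C-\bar{C}/(t_1t_2t_3)+C\bar{C}\,\kappa+\mathrm{F}_e/(1-t_3)\), \(\kappa=\frac{(1-t_1)(1-t_2)(1-t_3)}{t_1t_2t_3}\), is a nonzero character at \(\vec{k}=0\), contradicting the cancellation and the normalization \(\calE^{\PT}_{\pi(\vec{0})}=1\). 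Concretely, your grouping cannot produce the coefficient \(2\) in the linear part above — your linear difference supplies at most one copy of each range, while the squares in the diagonal factor require two — and your cross terms \(\rqq_v\bar{C}+C\bar{\rqq}_v\) give \((\rqq_v\bar{\rqq}_e-t_3\rqq_e\bar{\rqq}_v)\)-type contributions, the bar-swap of the needed \((\rqq_e\bar{\rqq}_v-t_3\bar{\rqq}_e\rqq_v)\), which exponentiate to differently shifted Pochhammers. The ``reflection identity'' recombination you defer to the end is precisely this unresolved \(k_{ij}\mapsto -k_{ij}\) bookkeeping; as set up it changes the answer rather than reorganizing it, so the proposal does not yet prove the stated formula.
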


\begin{proof}
  Let us write \(\mathrm{Q}_v(\pi(\vec{k}))\) as a difference
  \[\mathrm{Q}_v(\pi(\vec{k}))= \frac{\mathrm{Q}_e}{1-t_3}-\mathrm{F}'_v,\quad \mathrm{F}'_v=\sum_{(ij)\in \mu}\frac{t_1^it_2^jt_3^{k_{ij}}}{1-t_3},
  \quad \mathrm{Q}_e=\sum_{(ij)\in\mu}t_1^it_2^j.\]
The ratio of the measures in the statement is equal to the \(\mathrm{Exp}\) applied to the difference
\(\mathrm{V}^{\PT}_v(\pi(\vec{k}))-\mathrm{V}^{\DT}_v(\pi(\vec{k}))\) where \(\mathrm{V}^{\PT}_v\) is given by \eqref{eq:V_PT} and
\(\mathrm{V}^{\DT}_v\) is given by \eqref{eq:V_DT} with
\(\mathrm{F}_v=\mathrm{F}'_v\).
The  difference is equal to
\[2(-\mathrm{Q}_v+\frac{\bar{\mathrm{Q}}_v}{t_1t_2t_3})+
(\mathrm{Q}_e\bar{\mathrm{Q}}_v-t_3\bar{\mathrm{Q}}_e\mathrm{Q}_v)\frac{(1-t_1)(1-t_2)}{t_1t_2t_3}.\]
\end{proof}

Next let us observe that the expression
\[g(\vec{k},w,\vec{c})=
  \frac{(1-e^{wt_1})(1-e^{wt_2})(1-e^{wt_3})}{t_1t_2t_3}\sum_{(ij)\in\mu}
\frac{1-e^{k_{ij}wt_3}}{1-e^{wt_3}}e^{c_{ij}w},\quad c_{ij}=\mathbf{c}(ij)\]
is equal to the generating function of \(\rtt\)-characters of \(\ch_k(1)\) at \(\pi(\vec{k})\)
if
\(\vec{k}\ge 0\) and \(\pi(\vec{k})\in \Pi^{\DT}_\emptyset\).
On the other hand if \(\vec{k}\le 0\) the expression is equal to the generating function of \(T\)-characters
of \(-\ch'_k\) at \(\pi(\vec{k})\) if \(\pi(\vec{k})\in\Pi^{\PT}_\mu\).

Combining the formula for the bare vertex with the previous proposition we get:
\begin{multline*}
  f(\vec{k},\vec{z},\vec{w})=\prod_{i=1}^n\frac{[z_i+1+a_1+a_2]_{k_i}}{[z_i]_{k_i}}
  \prod_{j=1}^m g(\vec{k},w_j,\vec{z})
  \prod_{1\le i<j\le n} F^{-1}_{k_i-k_j}(z_i-z_j)
  \\
  \prod_{1\le i, j\le n} \frac{[z_i-z_j+1]_{k_{i}}[z_i-z_j+1+a_1+a_2]_{k_{i}}
   [z_i-z_j-a_1]_{k_{i}}[z_i-z_j-a_2]_{k_{i}} }{[z_i-z_j]_{k_{i}}
    [z_i-z_j-a_1-a_2]_{k_{i}}[z_i-z_j+1+a_1]_{k_{i}}[z_i-z_j+1+a_2]_{k_{i}}}.
 \end{multline*}

 Now the first formula in theorem \ref{thm:DTPTdeg0} follows from the
 formula for the bare vertex in theorem \ref{thm:mainPT} if we set:
 \[f(\vec{k},\vec{z},\vec{w})_\mu=f(\vec{k},\vec{z},\vec{w})J_\mu(\vec{z}),\]
 where \(J_\mu(\vec{z})\) is an interpolation polynomial \cite{Okounkov98}
 that represents the torus fixed point \(\mu\in \Hilb_n(\CC^2)\) in terms
 of Chern roots of \(\calO^{[n]}\).

 Also the same argument as in \ref{thm:mainPT} implies that for \(\vec{k}\ge 1\)
 the function \(f(\vec{k},\vec{z},\vec{w})\) vanishes if \(\pi(\vec{k})\notin \Pi^{\DT}_\emptyset\). Thus we have shown also the second formula in theorem \ref{thm:DTPTdeg0}.

\section{DT/PT correspondence: explicit conjectures}
\label{sec:dtpt-corr-expl}

\subsection{Rationality conjectures}
Let us first restate the rationality conjectures for PT invariants
\begin{conj}\cite{PandharipandeThomas10} For any classes $\alpha_i\in H^*(X)$ and any collection 
of positive integers $k_i$ the invariant
 $$\rzz^{X}_{\PT,\beta}(\ch_{k_1}(\alpha_1),\dots,\ch_{k_m}(\alpha_m))$$
 is a rational function of $q$ which is regular outside of the roots of $1$.
\end{conj}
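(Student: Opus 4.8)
The plan is to deduce the conjecture from the residue presentation of the bare vertex in Theorem~\ref{thm:mainPT}, reducing the general statement to the local-curve building blocks via the degeneration formalism for descendent PT invariants. By the relative gluing formula, any absolute invariant $\rzz^X_{\PT,\beta}(\prod_i\ch_{k_i}(\alpha_i))$ is assembled from capped one-leg descendent vertices and edge contributions of local curves $\calO(d_1)\oplus\calO(d_2)\to\PP^1$, and the non-equivariant value for general $X$ is the equivariant limit of such an assembly. Since the gluing and the rubber $S$-matrix of Section~\ref{sec:spec-equiv-param} contribute only $q$-denominators of the form $(1-q^d)$, it suffices to show that the equivariant local-curve invariants of Theorem~\ref{thm:PT-int} are rational in $q$ with poles only at roots of unity.

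The heart of the argument is to prove that each residue coefficient $\rzz^\bare_{\PT,\vec{k}}(\prod_r\ch[u_r]\mid c_\lambda)$ in Theorem~\ref{thm:mainPT} is a \emph{polynomial} in $\vec{k}$ with coefficients in $\CC(t)$, \emph{without} specializing the equivariant parameters. The mechanism is already visible in Section~\ref{sec:spec-equiv-param}: for fixed $\vec{k}$ each Pochhammer symbol $[x]_{k_i}=x(x+1)\cdots(x+k_i-1)$ is a polynomial in $x$, so $\Pi(\vec{k},z)$ is a rational function of $z$ whose Laurent coefficients at $z_i=\infty$ are power sums $\sum_{0\le j<k_i}j^{p}$, hence polynomials in $k_i$ by Faulhaber's formula. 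The same holds for the off-diagonal factors $F_{k_j-k_i}(z_i-z_j)^{-1}$ and, after extracting the coefficient of $u_r^{k_r}$, for the descendent exponentials $e^{t_3u_r(z_i+k_i)}/\delta(u_r)$, while the measure $\Omega(z)$ is independent of $\vec{k}$. The iterated residue at $z_i=\infty$ extracts finitely many of these Laurent coefficients, and therefore yields a polynomial $P(\vec{k})$ exactly as in the specialized computation, but now universally.

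It then remains to sum the geometric series. Writing $P(\vec{k})$ as a finite $\CC(t)$-combination of monomials $\prod_i k_i^{m_i}$, the generating function $\sum_{\vec{k}\ge0}P(\vec{k})\,q^{|\vec{k}|}$ becomes a finite combination of products $\prod_i\bigl(\sum_{k_i\ge0}k_i^{m_i}q^{k_i}\bigr)$, each a rational function of $q$ with poles only at $q=1$. Hence the bare vertex is rational with poles at roots of unity; assembling the two legs through the $\vec{k}$-independent edge term $\mathrm{E}^d(z',z'')$ of Theorem~\ref{thm:PT-int} preserves this, and the degeneration gluing only adds the $(1-q^d)$ poles noted above, giving regularity outside the roots of unity.

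The main obstacle is the first step rather than the residue computation: carrying out the degeneration reduction for arbitrary $X$ and arbitrary $\alpha_i\in H^*(X)$, and controlling the equivariant limit, requires the full relative descendent formalism together with rationality of the rubber $S$-matrix, i.e. regularity of the quantum differential equation entering the capped-vertex relation of Section~\ref{sec:spec-equiv-param}. A secondary subtlety is the cancelation of equivariant poles needed to pass to the non-equivariant limit, which the residue formula controls directly only once the capped vertex, rather than the bare vertex, is used.
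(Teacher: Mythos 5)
You are proposing a proof of a statement that the paper itself leaves open: it is stated as a conjecture of \cite{PandharipandeThomas10}, and the paper records it as proven only for Calabi--Yau $X$ (\cite{Toda10}, \cite{Bridgeland11} --- by Hall-algebra wall-crossing, a completely different mechanism), for complete intersections in toric varieties, and equivariantly for toric $X$ \cite{PandharipandePixton14}. Your first step already fails at this level of generality: the gluing/degeneration reduction of $\rzz^X_{\PT,\beta}(\prod_i\ch_{k_i}(\alpha_i))$ to capped one-leg vertices and local-curve edges exists for toric $X$, and \cite{PandharipandePixton14} extends it to complete intersections by exhibiting specific degenerations into such pieces; an arbitrary projective $3$-fold admits no degeneration into torus-equivariant local models, and it carries no torus action, so ``the non-equivariant value for general $X$ is the equivariant limit of such an assembly'' has no meaning. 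This is the fundamental obstruction --- the reason the statement is a conjecture --- and it cannot be absorbed into ``the full relative descendent formalism'' as a technicality. Relatedly, rationality of the rubber matrix $\mathrm{S}^\mu_\eta$ of Section~\ref{sec:spec-equiv-param} is a hard input tied to the quantum differential equation of $\Hilb_n(\CC^2)$, not a bookkeeping of $(1-q^d)$ denominators.

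Your central technical claim --- that $\rzz^{\bare}_{\PT,\vec{k}}$ is polynomial in $\vec{k}$ \emph{without} the specialization $t_1+t_2=ct_3$ --- is false, and the smallest case refutes it. Take $n=1$, $\lambda=(1)$, no descendent insertions: the localization weight at the fixed point labelled by $k$ is (up to signs and normalization) the Pochhammer ratio $[-a_1-a_2]_k/[1]_k$, so the bare vertex is a binomial series of the shape $(1-q)^{a_1+a_2}$ --- not a rational function of $q$ for generic equivariant parameters, and a Laurent polynomial exactly when $a_1+a_2\in\ZZ$, i.e.\ at the specialization of \cite{PandharipandePixton12}. Your Faulhaber computation correctly shows that the Laurent coefficients of $\Pi(\vec{k},z)$ at $z_i=\infty$ are polynomial in $\vec{k}$, but those coefficients are not what the vertex coefficient is: the residues producing the vertex are anchored at the content poles coming from $\Omega(z)$ and $e_\lambda(z)$, where the Pochhammer ratios are \emph{evaluated} (yielding Gamma-function ratios in $\vec{k}$), and the poles of $\Pi(\vec{k},z)$ themselves (e.g.\ the zeros of $[-z_i+1]_{k_i}$ at $z_i=1,\dots,k_i$) move with $\vec{k}$ and escape any fixed contour $|z_i|=R_i$, so the identification of the vertex with the residue at infinity cannot be made uniformly in $\vec{k}$. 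The whole point of the specialization in Section~\ref{sec:spec-equiv-param} is that it telescopes $F_k(z)$ and the diagonal factors into rational functions of $(z,\vec{k})$ of degree bounded independently of $\vec{k}$, after which the universal-polynomial expansion and the geometric-series summation become legitimate; this is also why rationality holds for the \emph{capped} vertex but fails for the bare one, and why even the paper only obtains the weaker specialized statement rather than anything approaching the conjecture.
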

There is even more precise conjecture in \cite{PandharipandePixton13} which in particular gives a bound
(in terms of disibility of $\beta$)
on the degree of the roots of $1$ where the singularities of the rational function might occur.

The conjecture is proven in the case when  \(X\) is Calabi-Yau \cite{Toda10},
\cite{Bridgeland11} and in the case of complete intersection in a toric variety \cite{PandharipandePixton14}.
The rationality is also known for the equivariant theory of the toric  varieties \cite{PandharipandePixton14}.

In DT theory it is customary to factor out degree \(0\) invariants and
work with the renormalized generating functions:
\[\rzz^{\prime,X}_{\DT,\beta}(\ch_{k_1}(\alpha_1)\dots \ch_{k_m}(\alpha_m)):=
\rzz^{X}_{\DT,\beta}(\ch_{k_1}(\alpha_1)\dots \ch_{k_m}(\alpha_m))/\rzz^{X}_{\DT,0}(1)\]

As we explained in the introduction in DT theory the rationality property does not hold as one can see
from the dilaton equation. It is expected though that rationality holds in the stationary sector of the theory:

\begin{conj}\cite{OblomkovOkounkovPandharipande18} For any classes $\alpha_i\in H^{>0}(X)$ and any collection 
of positive integers $k_i$ the invariant
 $$\rzz^{\prime,X}_{\DT,\beta}(\ch_{k_1}(\alpha_1),\dots,\ch_{k_m}(\alpha_m))$$
 is a rational function of $q$ which is regular outside of the roots of $1$.
\end{conj}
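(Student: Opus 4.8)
The plan is to deduce the non-equivariant statement from the equivariant local-curve computations of this note by a degeneration argument, running the residue formalism of Theorem~\ref{thm:mainPT} on the Donaldson--Thomas side. Because every $\alpha_i$ lies in $H^{>0}(X)$, no insertion carries a fundamental-class component, and under the degeneration formula the stationary descendents distribute across the degenerate pieces as genuine primary descendents. Degenerating $X$ into toric building blocks and then further into local curves $\calO(d_1)\oplus\calO(d_2)\to\PP^1$, the reduced series $\rzz^{\prime,X}_{\DT,\beta}$ is assembled, piece by piece, from capped one-leg DT vertices glued along the edge and rubber contributions of Section~\ref{sec:localization-formula}. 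It therefore suffices to prove that each equivariant capped one-leg DT vertex with stationary descendents is rational in $q$, and then to pass to the non-equivariant limit.

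To compute each bare DT vertex I would transport the PT residue formula of Theorem~\ref{thm:mainPT} across the explicit comparison of measures established in Section~\ref{sec:dtpt-formula}. That Proposition writes $\calE^{\DT}_{\pi(\vec{k})}/\calE^{\PT}_{\pi(\vec{k})}$ as a finite product of Pochhammer symbols in the exponents $k_{ij}$; combined with the bare-vertex sum and the character generating function $g(\vec{k},w,\vec{c})$ of Section~\ref{sec:dtpt-formula}, this yields the bare DT vertex as an iterated residue with the same integrand $f(\vec{k},\vec{z},\vec{w})$ as on the PT side but summed over a shifted range of $\vec{k}$, as recorded in the two identities of Theorem~\ref{thm:DTPTdeg0}. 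The reduced series is then extracted by discarding the empty-column terms, which are precisely the degree-zero contributions assembling into the factor $\rzz^{X}_{\DT,0}(1)$.

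Rationality of the resulting $q$-sum I would obtain by the universal-polynomial mechanism of Section~\ref{sec:spec-equiv-param}. On the specialization $a_1+a_2=c$, $c\in\ZZ_{\ge 0}$, every Pochhammer factor degenerates to a ratio of products of linear terms and hence to a rational function of $\vec{k}$; expanding $\Omega(\vec{z})$ times the integrand in Laurent series at $z_i=\infty$ and taking the iterated residue then produces, for each multidegree of the descendent variables, a universal polynomial $P_{\vec{i}}(\vec{k})$, so that the geometric-type sum $\sum_{\vec{k}}P_{\vec{i}}(\vec{k})\,q^{|\vec{k}|}$ over the relevant lattice cone lies in $\CC(q)$. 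To remove the specialization and recover the full equivariant capped vertex I would invoke the capping procedure of \cite{PandharipandePixton12}: capping converts the specialized rationality, valid for every $c\in\ZZ_{\ge 0}$, into rationality of the capped vertex as a genuine rational function of $q$ over $\CC(t)$, after which the non-equivariant limit is harmless.

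The principal obstacle is this promotion from specialized to full rationality: the mechanism above only controls the vertex along the lines $a_1+a_2=c$, and bridging these specializations requires the delicate cancellation-of-poles argument of \cite{PandharipandePixton12}, of which Section~\ref{sec:spec-equiv-param} reproduces only the rationality half. A second genuine difficulty is the global-to-local reduction for a non-toric $X$: descendent DT theory must be degenerated with its full relative formalism, and one needs deformation invariance to ensure that stationary insertions are preserved. Third, proving that division by $\rzz^{X}_{\DT,0}(1)$ cancels the irrationality \emph{exactly}, rather than up to a further rational factor, is precisely the content of Conjecture~\ref{conj:DT0} and is not settled here. Finally, the regularity assertion---poles only at roots of unity---I would read off from the denominators $1-q^{|\vec{k}|}$ generated by the geometric summation of $P_{\vec{i}}(\vec{k})q^{|\vec{k}|}$, which requires control of the explicit pole structure of the iterated residue.
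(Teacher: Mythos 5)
The statement you were asked about is not proved in this paper at all: it is stated as a conjecture (attributed to \cite{OblomkovOkounkovPandharipande18}), with the only known case being complete toric varieties, proved elsewhere by entirely different means (the GW/DT descendent correspondence via vertex operators, not the residue formalism of this note). So there is no paper proof to match your attempt against, and your text is honest that it is a program rather than a proof --- but the program itself breaks at several concrete points beyond the obstacles you list.

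First, the global-to-local reduction does not exist: a general $3$-fold $X$ cannot be degenerated into toric building blocks or local curves $\calO(d_1)\oplus\calO(d_2)\to\PP^1$, so the gluing of capped one-leg vertices you describe is only available in the toric (or complete-intersection-in-toric) setting already covered by the known cases; for arbitrary $X$ this step simply has no substitute in the paper or in the literature it cites. Second, you misread Theorem~\ref{thm:DTPTdeg0}: its DT side is $\rzz^{\CC^3}_{\DT,0,\mu}$, a \emph{degree-zero} invariant (the $3$D partitions there have no infinite leg in curve class $n[\PP^1]$), and its PT side carries the unexplained ratio of Euler classes as a forced insertion, not an arbitrary product $\prod_i\ch_{k_i}$. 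It therefore does not give the bare one-leg DT vertex in class $n[\PP^1]$ as ``the PT integrand summed over a shifted range,'' and no such residue formula for the DT vertex with a leg is established anywhere in the note. Third, even in the equivariant local-curve setting, Section~\ref{sec:spec-equiv-param} deliberately proves only the weak half of the Pandharipande--Pixton lemma (rationality in $q$ along the specializations $t_1+t_2=ct_3$, $c\in\ZZ_{\ge 0}$, rather than Laurent polynomiality), and the bridge from these countably many specializations to rationality over $\CC(t)$ is exactly the hard cancellation-of-poles argument of \cite{PandharipandePixton12}, which you would have to import wholesale. Finally, as you yourself note, the assertion that dividing by $\rzz^X_{\DT,0}(1)$ removes \emph{all} irrationality is the content of Conjecture~\ref{conj:DT0} and is open, and the pole-location claim (regularity away from roots of unity) is nowhere controlled by the mechanism $\sum_{\vec{k}}P(\vec{k})q^{|\vec{k}|}$ alone once equivariant parameters are unspecialized. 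In short: the statement remains a conjecture, and your proposal, while correctly oriented toward the toric case, does not close any of the gaps that make it one.
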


The conjecture is proven in the case of complete toric varieties in \cite{OblomkovOkounkovPandharipande18}.

\subsection{Non-equivariant DT/PT correspondence with descendents}
DT invariants are not rational function of $q$ but the conjecture below links them 
to PT invariants thus provides a `regularization` of the DT invariants. Let us fix
notations first:
$$ M(q)=\prod_{i=1}^\infty (1-q^i)^{-i},\quad \mathfrak{F}_1=q\frac{d}{dq}\ln M(-q),$$
$$ \mathfrak{F}_1^{(i)}=\left(q\frac{d}{d q}\right)^i \mathfrak{F}_1.$$
Let us also introduce the `Euler class':
$$ \mathrm{e}=c_1c_2-c_3\in H^6(X).$$
For example, in the case $X=\mathbb{P}^3$, we have $\mathrm{e}=20[\pt]$.

\begin{conj} \label{conj:Frank}
For any classes $\alpha_i\in H^{>0}(X)$ and any collection 
of positive integers $k_i$, $l_j$ we have
\begin{multline*} 
 \rzz^{\prime,X}_{\DT,\beta}(\prod_{i=1}^n\ch_{k_i}(\alpha_i)
\prod_{j=1}^m \ch_{l_j}(1))=\rzz^{X}_{\PT,\beta}(\prod_i\ch_{k_i}(\alpha_i)\prod_j \ch_{l_j}(1))+\\
\sum_{s=1}^m \sum_{J_1,\dots,J_s}f(J)
\rzz^{X}_{\PT,\beta}(\prod_{i=1}^n\ch_{k_i}(\alpha_i)\prod_{r=1}^s \ch_{|| J_r ||}(\mathrm{e})
\prod_{j\in\bar{J}}\ch_{l_j}(1)).
\end{multline*}
where the sum is taken over $s$-tuples of the disjoint subsets $J_r\subset\{1,\dots,m\}$;
$\bar{J}=\{1,\dots,m\}\setminus \cup_r J_r$
\begin{gather*}
||J_r||=\sum_{j\in J_r} (l_j-3),\quad f(J)=\prod_{r=1}^s f(J_r),\\
f(J_r)=(-1)^{|J_r|-1}\mathfrak{F}_1^{(|J_r|-1)}  \frac{ ||J_r||!}{\prod_{j\in J_r}(l_j-3)!}.  
\end{gather*}
\end{conj}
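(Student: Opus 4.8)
The plan is to deduce the identity from the structural fact underlying the whole paper: the difference between the Hilbert scheme $\Hilb(X,\beta)$ and the stable-pairs space $\PPP(X,\beta)$ is accounted for entirely by zero-dimensional (floating) subschemes, whose contributions are governed by the degree-zero theory. Since both sides are built from tautological descendent integrals, I would first argue that the correspondence is \emph{universal}: by deformation invariance of the DT and PT invariants together with the degeneration/capped-vertex (TQFT) formalism of \cite{MaulikOblomkovOkounkovPandharipande11, OblomkovOkounkovPandharipande18}, the two sides are universal expressions in the Chern numbers of $X$, the class $\beta$, and the descendent data. It therefore suffices to establish the formula for toric $X$, where every invariant is computed by $\rtt$-localization and reduces to products of the bare vertices $\rzz^{\bare}_{\DT}$, $\rzz^{\bare}_{\PT}$ and edge terms set up in Sections~\ref{sec:one-leg-invariants}--\ref{sec:localization-formula}; the non-equivariant statement is then recovered as the $t_i\to 0$ limit.

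Second, I would feed in the equivariant DT/PT vertex comparison of this paper. The Proposition of Section~\ref{sec:dtpt-formula} and Theorem~\ref{thm:DTPTdeg0} already express the degree-zero DT vertex $\rzz^{\CC^3}_{\DT,0,\mu}(\prod_i\ch[w_i])$ through the PT bare vertex by summing one and the same integrand $f(\vec k,\vec z,\vec w)_\mu$ over the two ranges $\vec k\ge 0$ and $\vec k\ge -1$; the difference of these ranges (the columns with $k_{ij}=0$ versus $k_{ij}\ge 1$) is exactly the floating-point contribution. Taking the non-equivariant limit collapses the ratio of Euler classes in Theorem~\ref{thm:DTPTdeg0}, and the surviving characteristic class is forced to be $\mathrm{e}=c_1c_2-c_3=-c_3(T_X\otimes K_X)$, precisely the class controlling the MacMahon degree-zero series $\rzz^X_{\DT,0}(1)=M(-q)^{\int_X c_3(T_X\otimes K_X)}$. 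This is what produces the insertions $\ch_{||J_r||}(\mathrm{e})$ and identifies $\mathfrak{F}_1=q\frac{d}{dq}\ln M(-q)$ as the generating function of a single connected floating cluster.

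Third, I would organize the descendents by the \emph{exponential formula}. In the localized sum the descendent $\ch_{l_j}(1)=\pi^X_*\ch_{l_j}(\bqq)$ splits according to whether the Chern character is supported on the curve (the stable-pair part, producing the genuine PT terms $\rzz^X_{\PT,\beta}$) or on a floating cluster. Summing over numbers and locations of floating clusters turns the degree-zero series into a plethystic exponential, and insertions attached to one connected cluster pull out a logarithmic derivative; this is the origin of the sum over $s$-tuples of disjoint subsets $J_1,\dots,J_s$, with the unassigned indices $\bar J$ carried by the curve. For a single cluster carrying $J_r$ one computes its contribution from the degree-zero $\CC^3$ vertex: the point sheaf $\calO_Z$ has Chern character of the shape $(\text{density})\cdot[\pt]$, so each $\ch_{l_j}(1)$ acting on a cluster lowers the degree by $\dim X=3$ and contributes a factor $\frac{1}{(l_j-3)!}$, giving the multinomial $\frac{||J_r||!}{\prod_{j\in J_r}(l_j-3)!}$ with $||J_r||=\sum_{j\in J_r}(l_j-3)$; the remaining $q$-dependence of a block is $|J_r|-1$ applications of $q\frac{d}{dq}$ to $\mathfrak{F}_1$, together with the sign $(-1)^{|J_r|-1}$ reflecting the connected (cumulant) structure of the degree-zero descendent correlators. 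Assembling gives $f(J_r)=(-1)^{|J_r|-1}\mathfrak{F}_1^{(|J_r|-1)}\frac{||J_r||!}{\prod_{j\in J_r}(l_j-3)!}$ and hence the stated formula.

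The main obstacle is this last step: making the single-cluster descendent computation rigorous, i.e.\ proving that a connected degree-zero cluster carrying descendents $\ch_{l_j}(1)$, $j\in J_r$, contributes exactly $f(J_r)\,\ch_{||J_r||}(\mathrm{e})$ to the PT vertex. This demands control of the descendent action on the full (all-degrees) degree-zero DT vertex and a uniform extraction of the MacMahon-derivative structure in $\vec l$, which is where the genuine content beyond the equivariant comparison of Theorem~\ref{thm:DTPTdeg0} lies. A secondary difficulty is justifying the universality reduction in the non-equivariant limit---in particular that no characteristic class other than $\mathrm{e}$ survives the $t_i\to 0$ limit of the Euler-class ratio, so that the correction terms generalizing \eqref{eq:simple} collapse precisely to $\ch(\mathrm{e})$-insertions. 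This is presumably why the statement is recorded as a conjecture rather than a theorem.
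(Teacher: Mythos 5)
The statement you attempted is Conjecture~\ref{conj:Frank}: the paper contains \emph{no proof} of it. The only support offered there is a numerical check for $X=\PP^3$ in degrees $1$ and $2$, together with the remark that it is the non-equivariant shadow of the equivariant capped-vertex conjecture of Section~\ref{sec:equiv-wall-cross} (Hagborg--Oblomkov), which is itself open; the author moreover states explicitly that the paper ``fell short'' of converting the EGL-type DT/PT comparison into a precise correspondence formula. So there is no proof in the paper to compare yours against, and the question is whether your sketch closes the gap. It does not: it is a strategy outline whose two acknowledged obstacles coincide exactly with the open content.

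Concretely: (i) your pivotal assertion that the non-equivariant limit of the Euler-class ratio in Theorem~\ref{thm:DTPTdeg0} ``collapses'' and forces the single class $\mathrm{e}=c_1c_2-c_3$ is established nowhere -- the paper says expressly that it has no geometric interpretation for that ratio, and nothing in Section~\ref{sec:dtpt-formula} controls its behavior as $t_i\to 0$; the ratio sits inside an iterated-residue formula with poles in the equivariant parameters, so the limit need not even exist termwise, let alone produce $\ch_{\|J_r\|}(\mathrm{e})$ insertions. (ii) Theorem~\ref{thm:DTPTdeg0} is a one-leg statement for $X=\CC^2\times\PP^1$, comparing a degree-zero subsum $\rzz^{\CC^3}_{\DT,0,\mu}$ with a modified bare PT vertex; to run your universality/degeneration reduction for general (toric, then arbitrary) $X$ you need the three-leg capped-vertex identity, which is precisely the conjecture of Section~\ref{sec:equiv-wall-cross} -- invoking it would be circular, since the paper notes that its non-equivariant specialization is a special case of Conjecture~\ref{conj:Frank}. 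Note also that the insertions $\ch_{l_j}(1)$ are exactly the non-stationary sector, where the known toric rationality and DT/PT results (and the simple factorization \eqref{eq:simple}) do not apply, so even the reduction-to-toric step is not routine here. (iii) The exponential-formula/cumulant step, which must produce the sign $(-1)^{|J_r|-1}$, the derivative order $\mathfrak{F}_1^{(|J_r|-1)}$, and the multinomial $\|J_r\|!/\prod_{j\in J_r}(l_j-3)!$ from connected degree-zero descendent correlators, is given only heuristically; the paper itself can verify only the $z^0,z^1,z^2$ coefficients via the dilaton equation. Your plan is a reasonable research program, and your candor about the obstacles is accurate -- but those obstacles are the theorem, and the statement remains a conjecture.
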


 The formula was checked numerically in case of $X=\mathbb{P}^3$ in degree $1$ and $2$.
%
An example for \(X=\mathbb{P}^3\), \(\beta=[L]\):
\begin{multline*}
\rzz^{\prime,X}_{\DT}(\ch_7(1)\ch_6(1))=\rzz^X_{\PT}(\ch_7(1)\ch_6(1))+
\mathfrak{F}_1 \rzz^X_{\PT}(\ch_4(\mathrm{e})\ch_6(1))+ \mathfrak{F}_1 \rzz^X_{\PT}(\ch_7(1)\ch_3(\mathrm{e}))
\\+(\mathfrak{F}_1)^2 \rzz^X_{\PT}(\ch_4(\mathrm{e})\ch_3(\mathrm{e}))
-\binom{7}{4} \mathfrak{F}^{(1)}_1\rzz^X_{\PT}(\ch_7(\mathrm{e})).
\end{multline*}

\subsection{Equivariant wall-crossing formula}
\label{sec:equiv-wall-cross}

In this section we present a conjectural relation between the equivariant
DT and PT theories with descendents. During 2017 Summer REU
project it was discovered experimentally that in the case
of one insertion there is a very simple relation, essentially the
relation is a simple as one can hope for. The most of computational
experiment was carried by James Hagborg and we are preparing a preprint with
proof of the simplest case of the conjecture.

In this note though we just present the conjecture without much of discussion
of the proofs. For the prettiest   form of of the conjecture is we need to use
a slight renormalization of the standard descendents. Below we define the
descendents in terms of the weights of these classes at the torus fixed
point \(\pi\):
\[\widehat{\ch}[z]=\sum_{m=0}^\infty \widehat{\ch}_m(\pi)z^m=
1-\prod_{i=1}(1-e^{t_iz})\sum_{(i,j,k)\in \pi}e^{(it_1+jt_2+kt_3)z}.\]

In the case of DT theory \(\pi\) is the set of monomials in the basis of the space of the local sections of 
\(\calO_X/\calI\) and \(\widehat{\ch}_k(\alpha)\) differs from the standard descendent
\(\ch_k(\alpha)\) by a sign.
 In the of PT theory \(\pi\) is the set of monomials
 in the space of local sections of \(\calF\) in a stable pair
 \([\calO\to\calF]\). In both DT and PT \(\pi\) is  (in general infinite) 3D Young diagram. We denote
 by \(\widehat{\ch}[z]\) the generating function of the corresponding descendents.

 In the previous
 sections we discussed the capped vertex with one leg, for the definition of general triple-legged
 capped vertices we refer to \cite{MaulikOblomkovOkounkovPandharipande11}, \cite{PandharipandePixton14}.
 It is defined in terms of DT/PT theory of \(X=\mathbb{P}^1\times \mathbb{P}^1\times\mathbb{P}^1\)
 relative to the divisor \(D=D_1\cup D_2 \cup D_3\), \(D_1=0\times \mathbb{P}^1\times\mathbb{P}^1\),
 \(D_2=\mathbb{P}^1\times 0\times\mathbb{P}^1\),
\(D_3= \mathbb{P}^1\times \mathbb{P}^1\times 0\).
 
 \begin{conj}[Hagborg-Oblomkov, 2017]
   For any \(\lambda,\mu,\nu\) we have the relation between the capped vertices:
   \[\rzz^{X/D}_{\DT}(\widehat{\ch}[z]|\lambda,\mu,\nu)^\rtt=\rzz^{X/D}_{\PT}(\widehat{\ch}[z]|\lambda,\mu,\nu)^\rtt \rzz^{X/D}_{\DT,0}(\widehat{\ch}[z])^\rtt.
   \]
 \end{conj}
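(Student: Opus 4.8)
The plan is to establish the identity by reducing the capped three-leg vertices to bare vertices, comparing the DT and PT localization measures, and then tracking how the normalized insertion $\widehat{\ch}[z]$ is shared between the PT ``core'' and the degree-zero ``gas.'' First I would invoke the gluing formula for the capped vertex of $X=\PP^1\times\PP^1\times\PP^1$ relative to $D=D_1\cup D_2\cup D_3$: in each theory the capped vertex is recovered from the bare vertex by pairing with the matrices $\mathrm{W}$ and the rubber solution $\mathrm{S}$ recalled in Section~\ref{sec:spec-equiv-param}. Since the capping data are identical for the DT, PT and degree-zero theories and only the bare vertices differ, the task reduces to a statement about the bare-vertex generating functions; the insertion-free multiplicativity $\rzz^{X/D}_{\DT}(1)=\rzz^{X/D}_{\PT}(1)\,\rzz^{X/D}_{\DT,0}(1)$, the classical DT/PT correspondence, then guarantees that the pairing with $\mathrm{W},\mathrm{S}$ transports a bare-vertex product to the desired capped product.

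Next I would fix legs $(\lambda,\mu,\nu)$ and organize the DT $\rtt$-fixed loci as the minimal PT configuration (the three infinite legs) decorated by a finite degree-zero gas, so that each fixed three-dimensional partition decomposes as $\pi=\pi_{\PT}\sqcup\pi_0$. The insertion is additive on this splitting: writing $\widehat{\ch}[z]=1-\prod_i(1-e^{t_iz})\sum_{(ijk)\in\pi}e^{(it_1+jt_2+kt_3)z}$, the character sum is linear in $\pi$, so $\widehat{\ch}[z](\pi)=\widehat{\ch}[z](\pi_{\PT})+\widehat{\ch}[z](\pi_0)-1$. If the DT measure factored cleanly as $\calE^{\DT}_\pi=\calE^{\PT}_{\pi_{\PT}}\calE^{\DT}_{\pi_0}$, summing would produce only the Leibniz combination $\rzz_{\PT}(\widehat{\ch})\rzz_{\DT,0}(1)+\rzz_{\PT}(1)\rzz_{\DT,0}(\widehat{\ch})-\rzz_{\PT}(1)\rzz_{\DT,0}(1)$, which differs from the conjectured product $\rzz_{\PT}(\widehat{\ch})\rzz_{\DT,0}(\widehat{\ch})$ exactly by the bilinear term $[\rzz_{\PT}(\widehat{\ch})-\rzz_{\PT}(1)]\,[\rzz_{\DT,0}(\widehat{\ch})-\rzz_{\DT,0}(1)]$.

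The resolution, and the technical heart of the proof, is that the DT measure does \emph{not} factor cleanly: the ratio $\calE^{\DT}_{\pi(\vec k)}/\calE^{\PT}_{\pi(\vec k)}$ computed in the measure-ratio Proposition contains, besides a per-box diagonal factor, the double product over pairs $(ij),(lm)$ of boxes, which couples the core to the gas and the gas to itself. I would extend that one-leg computation to the three-leg situation by the same bookkeeping of $\mathrm{V}^{\PT}_v-\mathrm{V}^{\DT}_v$, and then show that, once $\widehat{\ch}[z]$ is attached, these interaction factors resum precisely to the missing bilinear term. Concretely, the diagonal factors together with Theorem~\ref{thm:DTPTdeg0}, which already presents the degree-zero DT sum as a PT-type residue, identify the gas generating function with $\rzz^{X/D}_{\DT,0}(\widehat{\ch}[z])$, while the off-diagonal factors produce the cross-coupling that upgrades the Leibniz sum to the product.

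The main obstacle is exactly this resummation: proving that the descendent-weighted core and gas couple only through the overall product, with all genuinely mixed contributions organizing into the bilinear correction and nothing more. I expect to control this through the explicit Pochhammer form of the measure ratio together with an induction on the number of degree-zero boxes, equivalently on $|\lambda|+|\mu|+|\nu|$, the base case $\lambda=\mu=\nu=\emptyset$ being the simplest case announced in the Hagborg--Oblomkov preprint. Rationality makes the bookkeeping tractable: the PT and degree-zero capped vertices are $q$-expansions of rational functions by the results recalled in Section~\ref{sec:spec-equiv-param}, so the identity may be read as an equality of rational functions of $q$, and it suffices to match finitely many $q$-coefficients or to verify it after the specialization $t_1+t_2=ct_3$, where Theorem~\ref{thm:mainPT} renders the bare vertex manifestly rational and the residue formulas of Section~\ref{sec:local-as-cont} apply directly.
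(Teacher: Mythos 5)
There is no proof in the paper to compare your argument against: the statement is presented as an open conjecture (Hagborg--Oblomkov, 2017), found experimentally during the 2017 REU. The paper proves nothing beyond noting that the coefficients of \(z^0,z^1,z^2\) follow from the dilaton equation and the descendent-free DT/PT correspondence of \cite{MaulikOblomkovOkounkovPandharipande11}, \cite{MaulikPandharipandeThomas10}; the author states that the note presents the conjecture ``without much of discussion of the proofs'' and that the paper ``fell short'' of a full equivariant DT/PT formula. So your text must be judged as a sketch of an attack on an open problem, and as such it has two genuine gaps.

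First, the decomposition \(\pi=\pi_{\PT}\sqcup\pi_0\) of DT fixed points into a minimal PT core plus a finite degree-zero gas does not exist: a \(\rtt\)-fixed ideal sheaf with leg asymptotics \((\lambda,\mu,\nu)\) is an arbitrary 3D partition with those asymptotics, with no canonical splitting off of the legs, and on the PT side the fixed loci for two or three nonempty legs are not isolated points but positive-dimensional (cf.\ \cite{PandharipandeThomas09}), so a term-by-term matching of localization contributions is structurally unavailable. Note that even in the one-leg degree-zero setting the paper's Theorem~\ref{thm:DTPTdeg0} does \emph{not} split fixed points: it realizes \(\rzz^{\CC^3}_{\DT,0,\mu}\) and a PT bare vertex as sums of one meromorphic function \(f(\vec{k},\vec{z},\vec{w})_\mu\) over the two ranges \(\vec{k}\ge -1\) and \(\vec{k}\ge 0\), and the PT side carries a ratio of Euler classes for which, as the paper says explicitly, no geometric interpretation is known; your step identifying the gas with \(\rzz^{X/D}_{\DT,0}(\widehat{\ch}[z])\) via that theorem ignores this insertion entirely. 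Your central claim --- that the off-diagonal Pochhammer factors of the measure-ratio Proposition resum precisely to the bilinear correction \(\bigl[\rzz_{\PT}(\widehat{\ch})-\rzz_{\PT}(1)\bigr]\bigl[\rzz_{\DT,0}(\widehat{\ch})-\rzz_{\DT,0}(1)\bigr]\) --- is the conjecture restated, with no mechanism offered. Second, your closing verification strategy fails on its face: \(\rzz^{X/D}_{\DT,0}(\widehat{\ch}[z])^\rtt\) is not a rational function of \(q\), since degree-zero descendent series are built from \(M(q)\) and the functions \(\mathfrak{F}_k\) --- this irrationality is the paper's whole point about degree zero --- so the identity cannot be read as an equality of rational functions of \(q\), nor checked on finitely many coefficients (which would in any case require a priori degree bounds you do not have). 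The one sound piece is the elementary algebra showing the conjectured product differs from the naive Leibniz combination by the bilinear term; what is missing is any reason the DT measure's core-gas coupling produces exactly that term.
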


 The coeffients in front of \(z^i\), \(i=0,1,2\) in the last formulas
 follow from the dilaton equation and DT/PT correspondence without the
 descendents \cite{MaulikOblomkovOkounkovPandharipande11}, \cite{MaulikPandharipandeThomas10}. 
 
 Since the capped vertex is a building block of the DT/PT theories, the conjecture
 for the capped vertex implies the conjecture for the absolute DT/PT theories:

 \begin{conj}
   For any toric \(X\) and any \(\gamma\in H^*_T(X)\) we have:
\[\rzz^{X}_{\DT}(\widehat{\ch}[z](\gamma))^\rtt=\sum_k\rzz^{X}_{\PT}(\widehat{\ch}[z](\gamma_k^{(1)}))^\rtt \rzz^{X}_{\DT,0}(\widehat{\ch}[z](\gamma^{(2)}_k))^\rtt,
   \]
   where \(\sum_k \gamma_k^{(1)}\otimes \gamma_k^{(2)}=\Delta_*(\gamma)\) is the
   Kunneth decomposition for the diagonal embedding \(\Delta:X\to X\ti X\).
 \end{conj}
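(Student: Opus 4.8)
The plan is to derive this absolute identity from the Hagborg–Oblomkov capped-vertex conjecture by the toric gluing formalism of \cite{MaulikNekrasovOkounkovPandharipande06a}, \cite{PandharipandePixton14}, tracking how a single descendent interacts with the degeneration. First I would recall that for a smooth toric threefold \(X\), equivariant localization and the capped-vertex/edge decomposition write each of \(\rzz^X_{\DT}\), \(\rzz^X_{\PT}\) as a sum over combinatorial configurations—3D partitions at the vertices of the moment polytope and partition labels on the compact edges—each vertex \(v\) contributing a capped vertex (denote its contribution \(\DT_v\), resp.\ \(\PT_v\)) and each edge an edge factor, while \(\rzz^X_{\DT,0}\) factors as a product over vertices of leg-free local degree-zero contributions \(\DT_{0,v}\). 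The crucial structural input is that in the capped-vertex conjecture the factor \(\rzz^{X/D}_{\DT,0}(\widehat{\ch}[z])\) carries no legs \(\lambda,\mu,\nu\): all edge data therefore live on the PT side, so that in the insertion-free case (the established capped DT/PT correspondence \cite{MaulikOblomkovOkounkovPandharipande11}, \cite{MaulikPandharipandeThomas10}) the vertexwise factorization \(\DT_v=\PT_v\cdot\DT_{0,v}\) assembles into \(\rzz^X_{\DT}=\rzz^X_{\PT}\cdot\rzz^X_{\DT,0}\).

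Next I would introduce the single descendent. Since \(\widehat{\ch}[z](\gamma)\) is the pushforward along \(X\) of a renormalized tautological class of the universal sheaf \(\bqq\) capped with \(\gamma\), localizing this pushforward expresses the insertion as a sum over the torus-fixed points \(v\) of \(X\)—the polytope vertices—each contributing the renormalized vertex character \(\widehat{\ch}[z]\) weighted by \(\gamma|_v/e(T_vX)\). Thus in \(\rzz^X_{\DT}(\widehat{\ch}[z](\gamma))\) the lone insertion sits at a single vertex \(v_0\); applying the capped-vertex conjecture at \(v_0\) turns the one DT descendent into a product of a PT descendent and a \(\DT_0\) descendent at the same \(v_0\), while the insertion-free factorization handles the rest, giving
\[\rzz^X_{\DT}(\widehat{\ch}[z](\gamma))=\sum_{v_0}\frac{\gamma|_{v_0}}{e(T_{v_0}X)}\,\PT_{v_0}(\widehat{\ch})\,\DT_{0,v_0}(\widehat{\ch})\prod_{v\ne v_0}\PT_v\,\DT_{0,v}.\]

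The heart of the argument is to recognize this as the localization of the right-hand side. In \(\sum_k\rzz^X_{\PT}(\widehat{\ch}[z](\gamma^{(1)}_k))\,\rzz^X_{\DT,0}(\widehat{\ch}[z](\gamma^{(2)}_k))\) the PT descendent localizes independently to a vertex \(v_1\) with weight \(\gamma^{(1)}_k|_{v_1}\) and the \(\DT_0\) descendent to a vertex \(v_2\) with weight \(\gamma^{(2)}_k|_{v_2}\). Summing over the Künneth components collapses the two weights through the diagonal restriction identity
\[\sum_k\gamma^{(1)}_k|_{v_1}\,\gamma^{(2)}_k|_{v_2}=(\Delta_*\gamma)|_{(v_1,v_2)}=\delta_{v_1,v_2}\,\gamma|_{v_1}\,e(T_{v_1}X),\]
which vanishes off the diagonal and produces a single extra factor \(e(T_{v_0}X)\) at \(v_1=v_2=v_0\); this factor cancels one power of the vertex normalization and returns precisely the displayed DT expression, with the PT and \(\DT_0\) insertions forced onto the common vertex \(v_0\). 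This is exactly why \(\Delta_*\gamma\), rather than \(\gamma\) itself, governs the correspondence: a single DT insertion becomes a coupled PT–\(\DT_0\) pair at one vertex, and only the diagonal can tie the two otherwise independent vertex sums together.

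The main obstacle is not this diagonal bookkeeping but the rigorous justification of the descendent gluing itself—namely that inserting \(\widehat{\ch}[z](\gamma)\) into the absolute theory decomposes, under localization, as the local renormalized vertex descendent \(\widehat{\ch}[z]\) inserted at each vertex and weighted by \(\gamma|_v\), compatibly with the capped-vertex/edge TQFT of \cite{MaulikOblomkovOkounkovPandharipande11}, \cite{PandharipandePixton14}. Concretely one must check that the tautological class restricts at each toric fixed locus to the vertex character used in the Hagborg–Oblomkov conjecture and that the descendent leaves the edge factors untouched, so that the single insertion genuinely localizes to one vertex. Once this compatibility and the leg-freeness of the degree-zero factor are in place, the identity follows formally from the capped-vertex conjecture together with the diagonal restriction identity above.
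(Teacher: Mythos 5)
Your derivation is correct and follows essentially the same route the paper intends, which asserts in one line that ``the conjecture for the capped vertex implies the conjecture for the absolute DT/PT theories'': you assemble the absolute series from capped vertices and edges via the gluing formalism of \cite{MaulikOblomkovOkounkovPandharipande11}, \cite{PandharipandePixton14}, use the leg-freeness of the degree-zero factor, and your diagonal restriction identity \((\Delta_*\gamma)|_{(v_1,v_2)}=\delta_{v_1,v_2}\,\gamma|_{v_1}\,e(T_{v_1}X)\) correctly accounts for why the K\"unneth decomposition of \(\Delta_*\gamma\) forces the PT and \(\mathrm{DT}_0\) insertions onto a common vertex with exactly one surviving Euler-class normalization. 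As you note yourself, the argument remains conditional on the Hagborg--Oblomkov capped-vertex conjecture and on the compatibility of the renormalized descendent with the capped gluing, which is precisely the status the statement has in the paper.
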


 The last formula admits non-equivariant specialization and it is easy to
 see that this specialization is a particular case of the conjecture~\ref{conj:Frank}.

\end{document}